\documentclass[pdflatex,sn-mathphys-num]{sn-jnl}

\usepackage{graphicx}%
\usepackage{multirow}%
\usepackage{amsmath,amssymb,amsfonts}%
\usepackage{amsthm}%
\usepackage{mathrsfs}%
\usepackage[title]{appendix}%
\usepackage{xcolor}%
\usepackage{textcomp}%
\usepackage{manyfoot}%
\usepackage{booktabs}%
\usepackage{algorithm}%
\usepackage{algorithmicx}%
\usepackage{algpseudocode}%
\usepackage{listings}%
\usepackage{fancyhdr}
\usepackage{setspace}
\usepackage{enumitem}
\usepackage{amssymb}
\usepackage{amsmath}
\usepackage{pst-node}
\usepackage{tikz-cd} 
\usepackage[all,cmtip]{xy}

\usepackage{mathrsfs}
\usepackage{amsthm}

\theoremstyle{thmstyleone}%
\newtheorem{theorem}{Theorem}
\newtheorem{lemma}{Lemma}
\newtheorem{proposition}[theorem]{Proposition}%
\newtheorem{corollary}[theorem]{Corollary}
\theoremstyle{thmstyletwo}%
\newtheorem{remark}{Remark}%

\theoremstyle{thmstylethree}%

\begin{document}

\title[On the generic Simplicity of the spectrum for Connection
Laplacian and $G$-simplicity on Principal Bundles]{On the generic Simplicity of the spectrum for Connection
Laplacian and $G$-simplicity on Principal Bundles}

\author*[1,2]{\fnm{Geovane C.} \sur{Brito}}\email{gcbritomath@gmail.com}

\author[2,3]{\fnm{Marcus A. M.} \sur{Marrocos}}\email{
marcusmarrocos@ufam.edu.br}
\equalcont{These authors contributed equally to this work.}

\affil*[1]{\orgdiv{Departamento de Matemática Aplicada}, \orgname{Instituto de Matemática e Estatística, Universidade de São Paulo}, \orgaddress{\street{Rua do Matão, 1010}, \city{São Paulo}, \postcode{05508‑090}, \state{SP}, \country{Brazil}}}

\affil[2]{\orgdiv{Departamento de Matemática}, \orgname{Universidade Federal do Amazonas}, \orgaddress{\street{Av. General Rodrigo Octávio, 6200}, \city{Manaus}, \postcode{69080‑900}, \state{AM}, \country{Brazil}}}


\abstract{In this paper, we prove that, for a residual set of $C^{k}$ connections defined on a smooth vector bundle $E \to M$, all eigenvalues of the connection Laplacian operator $\mathscr{L}$, acting on the space of sections of $E$, are simple. As an application, we prove that all eigenvalues of the Laplace-Beltrami operator on a compact $G$-principal bundle $P \to M$ are $G$-simple.}

\keywords{Laplacian, Vector bundles, Generic spectrum, Simple eigenvalue}


\pacs[MSC Classification]{Primary 58J50; Secondary 55R25}

\maketitle

\section{Introduction}
The studies of spectral behavior of Laplace-type operator defined on Riemannian context has a central position at the interface of differential geometry, spectral analysis and mathematical physics
\cite{kato, Arnold1972,kuwabara1982spectra}. The celebrated work of Uhlenbeck \cite{Uhlenbeck}, it has been understood that generic Riemannian metrics on a closed manifold show a rigid spectral behavior. Particulary, the author proved that the eigenvalues of the Beltrami-Laplace operator acting on functions are generically simple.

While the Laplace-Beltrami operator on functions has been extensively studied in several works, For instance \cite{Albert, bando, Craigsutton}, on the other hand, considerably less is known about the spectral behavior of Laplace-type operators defined on smooth vector bundles. This operators arise naturally in geometry and
physics, as the Hodge-Laplace operator acting on differential forms \cite{Peralta,megangier}, and Connection Laplacian operator on smooth vector bundles \cite{kuwabara1982spectra,Zelditchtensor,jung, Bourguignon}. 

The Connection Laplacian operator depends on the
Riemannian metric on the base manifold and linear connection defined on vector bundle $E$. Thus, If we assume that the linear connections $\nabla$ is not compatible with Riemannian metric $g$ on the base manifold, we can pertub each parameter $(g,\nabla)$ independently. 

First, the pertubation of Riemannian metric $g$ on the base manifold $M$ already been extensively analyzed in several contexts, including Riemannian covers,
principal bundles, and warped products. For instance, Zelditch 
\cite{Zelditchtensor}, Marrocos and Gomes \cite{GOMES201921} and Zelditch and Jung \cite{jung}. In this works, generic simplicity typically is obtained from additional hypothesis about the eigensections of Laplace-type operator or dimension of the fibers of smooth vector bundle. Other important work is due to Canzani \cite{Canzani2014}, who studied generic spectral
properties of the conformally covariant operators on vector bundles. She proved that under additional hypothesis about the eigenspaces that there exists a residual subset of the space of Riemannian metric such that, for metric in this subset, the conformally covariant operator has only simple eigenvalues. 

In contrast with previous works mentioned, the present work focuses on perturbations of bundle connections. Recently, Zelditch and Jung \cite{jung} proved the simplicity generic
properties of bundle connection in the special case of Hermitian line bundles over Riemann surfaces. 

The corresponding problem for perturbations of linear connections on smooth vector bundles $E$ with rank $m > 2$ remains open. In this work, we resolve this problem..

Our guiding principle is that, once the linear connection on the vector bundle is allowed to vary freely within a space of $\mathrm{End}(E)$-valued $1$-form on $M$, spectral degeneracies should generically disappear. 

More precisely, let $E$ be a smooth vector bundle of rank $m$ over a compact Riemannian manifold $M$ without boundary endowed with a linear connection $\nabla$ compatible with inner product on the fibers, and let
$$
\mathscr{L} : \Gamma_{L^{2}}(E) \longrightarrow \Gamma_{L^{2}}(E)
$$
denote the Connetion Laplacian operator acting on section of $E$. We study the behavior of the spectrum
of $\mathscr{L}$ along to a one-parameter analytic family of linear connection.

\section*{Statements of Results}

Fixe the Riemannian metric $g$ and vary the linear connection $\nabla$. We will prove the following result:

\begin{theorem}\label{teoremaconexão} Let $E$ be a smooth vector bundle of rank $m$ over a compact Riemannian manifold $M$ without boundary with inner product $\langle\cdot, \cdot \rangle_{E}$ on the fibers. Denote $\mathrm{Conn}(E)$ the space of smooth connections on $E$ compatible with a structural metric $\langle \cdot, \cdot \rangle_{E}$ defined on the fibers. Then, there is a residual subset $\widetilde{\mathrm{Conn}(E)} \subset \mathrm{Conn}(E)$ such that for every $\nabla \in \widetilde{\mathrm{Conn}(E)}$, the Connection Laplacian $\mathscr{L}$ has simple spectrum.
\end{theorem}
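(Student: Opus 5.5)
The plan is to follow the Uhlenbeck--Albert transversality scheme, with the connection as the perturbation parameter.

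Fix $\nabla_0\in\mathrm{Conn}(E)$. Every metric connection has the form $\nabla=\nabla_0+A$ with $A\in\Omega^1(M;\mathfrak{so}(E))$, so $\mathrm{Conn}(E)$ is an affine Fréchet space and in particular a Baire space. If $E$ is a real line bundle, $\mathfrak{so}(E)=0$ and the connection cannot be perturbed at all, so I assume (as is implicit in the statement) that $m\ge 2$, or that $E$ is Hermitian with $\mathfrak{u}(E)$ in place of $\mathfrak{so}(E)$. For $n\in\mathbb N$ let
\[
\mathcal{O}_n=\{\nabla:\ \text{the first $n$ eigenvalues of } \mathscr{L}_\nabla \text{ (with multiplicity) are simple}\}.
\]
The residual set will be $\bigcap_n\mathcal O_n$, so I must show each $\mathcal O_n$ is open and dense.

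\emph{Openness.} The operator $\mathscr{L}_{\nabla_0+A}=\nabla^*\nabla$ depends analytically on $A$ and is a relatively bounded perturbation of $\mathscr{L}_{\nabla_0}$ with compact resolvent. Kato's perturbation theory therefore gives continuity of the eigenvalues, and simplicity of finitely many eigenvalues is an open condition.

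\emph{Density.} Argue by induction on multiplicity. Take $\nabla$ and an eigenvalue $\lambda$ of multiplicity $k\ge2$ with eigenspace $V$. Along the analytic family $\nabla+tA$, Kato--Rellich says the branches emanating from $\lambda$ have derivatives at $t=0$ equal to the eigenvalues of the quadratic form $Q_A(u,v)=\langle \dot{\mathscr{L}}u,v\rangle_{L^2}$ on $V$. Here
\[
\dot{\mathscr{L}}u=-2\textstyle\sum_i A(e_i)\nabla_{e_i}u-(\operatorname{div}A)\,u .
\]
Integrating the divergence term by parts and using the skew-symmetry of $A$ gives
\[
Q_A(u,v)=\int_M\langle A,\ \omega(u,v)\rangle ,\qquad \omega(u,v)=\textstyle\sum_i e^i\otimes\big(\nabla_{e_i}u\wedge v+\nabla_{e_i}v\wedge u\big)
\]
up to sign and normalisation, where $x\wedge y$ denotes the skew endomorphism $\langle y,\cdot\rangle x-\langle x,\cdot\rangle y$. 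If some $A$ makes $Q_A|_V$ not a multiple of the identity, then for small $t$ the eigenvalue $\lambda$ splits and its multiplicity drops. Iterating finitely many times, with small steps that preserve the simplicity already obtained (by openness), yields density.

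\emph{Main obstacle.} The key step is excluding $Q_A|_V=c(A)\,\mathrm{Id}$ for every $A$. Since $A$ ranges over all smooth skew $\mathrm{End}(E)$-valued $1$-forms, this degenerate case forces the pointwise identities $\omega(u,v)\equiv0$ and $\omega(u,u)\equiv\omega(v,v)$ for any $L^2$-orthonormal pair $u,v\in V$. I expect to derive a contradiction as follows.

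First, unique continuation for the elliptic operator $\mathscr{L}-\lambda$ shows that $u$ and $v$ are pointwise linearly independent on a dense open set $U$. On $U$, the identities say that $\nabla_X u\wedge v=u\wedge\nabla_X v$ and $\nabla_X u\wedge u=\nabla_X v\wedge v$.

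Second, I will try to deduce from these that the plane $\mathrm{span}\{u,v\}$ is $\nabla$-parallel. The relations then become a first-order system on the $2\times2$ coefficient matrix. In particular I would aim to show $d(|u|^2-|v|^2)=0$ and $d\langle u,v\rangle=0$, so $|u|^2-|v|^2$ and $\langle u,v\rangle$ are constant.

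Third, I would play these conclusions off against each other. Replacing $(u,v)$ by the rotated orthonormal pair $\big((u+v)/\sqrt2,(u-v)/\sqrt2\big)$ turns $|u|^2-|v|^2$ into $2\langle u,v\rangle$, so both pointwise quantities must vanish identically. Then $u$ and $v$ are pointwise orthogonal with equal length, and the $\wedge$-identities force $\nabla u=\alpha\otimes v$ and $\nabla v=-\alpha\otimes u$ for some $1$-form $\alpha$. Plugging this into $\mathscr{L}u=\lambda u$ and the analogous equation for $v$ should give $|\alpha|^2=\lambda$ and $d^*\alpha=0$. Integrating $\Delta|u|^2$ and comparing with the zeros of $u$ forced elsewhere should then be contradictory; alternatively, perturbing by $A$ supported in a small ball where $u\wedge v\ne0$ breaks the equality directly. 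This is the step where I expect the real work: in rank $2$ it amounts to ruling out $V$ being a single complex line spanned by $u+iv$, and it may require a second-order (Kato) perturbation argument rather than a first-order one.
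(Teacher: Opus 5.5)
Your scheme (openness of $\mathcal{O}_n$, density via first-order Kato splitting along $\nabla+tA$) is the paper's. Your description of the degenerate case, $\omega(u,v)\equiv 0$ and $\omega(u,u)\equiv\omega(v,v)$ for $L^2$-orthonormal $u,v\in V$, is more careful than the paper's, which only compares diagonal terms. But the step you flag as the main obstacle is a genuine gap, and as formulated it is false: the degenerate case really occurs.

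Take $E$ orientable of rank $2$ and let $J$ be the fibrewise rotation by $\pi/2$. Because $\mathfrak{so}(2)$ is abelian, $\nabla J=0$ for \emph{every} $\nabla\in\mathrm{Conn}(E)$. Hence every $\mathscr{L}_{\nabla+tA}$ commutes with $J$, all eigenspaces are $J$-invariant and so even-dimensional, and no perturbation of any order inside $\mathrm{Conn}(E)$ splits them. Concretely, $(u,Ju)$ satisfies both of your identities, since $Ja\wedge Jb=a\wedge b$ in rank $2$. So no connection has simple spectrum, and the statement itself is false in this case, because a residual subset of the Baire space $\mathrm{Conn}(E)$ is nonempty.

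The same happens for every $m$ when $\dim M=1$: with $t$ arc length, $\nabla_{\partial_t}$ commutes with $\mathscr{L}=-\nabla_{\partial_t}^2$ and squares to $-\mu$ on $\ker(\mathscr{L}-\mu)$.

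Your step 3 also fails on $(u,Ju)$. The conditions $|u|=|v|$ and $u\perp v$ only give $\nabla u=\tfrac12\,d\log|u|^2\otimes u+\alpha\otimes v$, and the first term need not vanish. You cannot borrow the paper's exit either. Lemma~\ref{TeseMarrocos} ends with $\Psi=T\Phi$ for a constant orthogonal $T$, and the contradiction asserted in Lemma~\ref{simultaneously} fails for $T=J$, which is exactly this pair.

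Even for $m\ge 3$ and $\dim M\ge 2$ you cannot split at an arbitrary $\nabla$. For the product connection on $M\times\mathbb{R}^m$ and a simple eigenvalue $\mu$ of $\Delta_g$ with eigenfunction $\phi$, the eigenspace is $V=\{\phi c: c\in\mathbb{R}^m\}$. Then $\omega(\phi c,\phi c')=\phi\,d\phi\otimes(c\wedge c'+c'\wedge c)=0$, so $Q_A|_V\equiv 0$ for every $A$. Similarly, take $d\oplus\nabla^Q$ on $(M\times\mathbb{R})\oplus Q$ and a multiple eigenvalue of $\Delta_g$ not in $\mathrm{spec}(\mathscr{L}^Q)$. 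Again $Q_A|_V\equiv0$, while all of $V$ is pointwise proportional to $(1,0)$, so unique continuation alone does not give your step 1.

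The missing idea is a preliminary genericity reduction. First move $\nabla$ into the open set of connections for which, at every point, $R$ and finitely many $\nabla^k R$ have no common invariant line or plane. This set should be dense for $m\ge 3$, $\dim M\ge 2$, by a jet-transversality argument. Split eigenvalues only there. Then your own observation closes the argument:
\begin{itemize}
\item on $\{u\wedge v\neq 0\}$ the diagonal identity gives $\nabla_X u,\nabla_X v\in\mathrm{span}\{u,v\}$, a parallel plane field that the curvature would have to preserve;
\item on an open set where $u\wedge v\equiv 0$, writing $v=fu$ turns the identity into $(f^2-1)\,\nabla u\wedge u=0$, which gives either $v=\pm u$ (excluded by unique continuation) or a parallel line field (excluded as before).
\end{itemize}
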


As a consequence of the generic simplicity of linear connections, the previous results for Riemannian metrics hold without rigidity or infinitesimal rigidity assumptions on the eigenspaces.

\begin{corollary}
Let $E$ be a vector bundle over compact $M$ without boundary with rank $m\geq 2$ endowed with inner product on the fibers $\langle \cdot, \cdot \rangle_{E}$. Then, the set of the Riemannian metric such that the eigenvalues of operator $\mathscr{L}_{g_{0}, \nabla_{0}}$ is simples is a residual set in $\mathrm{Met}^{k}(g)$.
\end{corollary}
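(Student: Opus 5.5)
I will read the corollary with $\nabla_0$ fixed and taken from the residual set $\widetilde{\mathrm{Conn}(E)}$ of Theorem~\ref{teoremaconexão}. Some such genericity hypothesis on $\nabla_0$ is unavoidable. For the trivial bundle $E=M\times\mathbb{R}^m$ with the flat trivial connection, $\mathscr{L}_{g,\nabla_0}$ is $m$ copies of the scalar Laplacian $\Delta_g$. Every eigenvalue then has multiplicity at least $m\geq 2$, for every metric $g$. So no metric perturbation can produce simple spectrum, and the statement cannot hold for an arbitrary $\nabla_0$.

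\textbf{Step 1: reduction to open dense sets.} Following Uhlenbeck, for $N\in\mathbb{N}$ let $\mathcal{A}_N\subset\mathrm{Met}^k(g)$ be the set of metrics for which the first $N$ eigenvalues of $\mathscr{L}_{g,\nabla_0}$ are simple. The residual set will be $\bigcap_N\mathcal{A}_N$, so it suffices to show each $\mathcal{A}_N$ is open and dense.

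\textbf{Step 2: openness.} The coefficients of $\mathscr{L}_{g,\nabla_0}$ depend continuously on $g$ in the $C^k$ topology. The operators have a common domain $\Gamma_{H^2}(E)$ and are self-adjoint for the $g$-dependent $L^2$ structure. By Kato's perturbation theory, each eigenvalue cluster moves continuously in $g$, and a simple eigenvalue stays simple nearby. Hence $\mathcal{A}_N$ is open.

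\textbf{Step 3: density, via a first-variation computation.} Suppose $\lambda$ has multiplicity $\geq 2$ at $g$, with orthonormal eigensections $u,v$. Consider an analytic path $g_t=g+th$. By Kato--Rellich the eigenvalue splits into analytic branches. Their first derivatives are the eigenvalues of the symmetric form
\[
Q_h(u,v)=\int_M \langle h, T(u,v)\rangle\, dV_g ,
\]
restricted to the eigenspace. Here $T(u,v)$ is the symmetric $2$-tensor
\[
T(u,v)=-\tfrac12\big(\langle\nabla_0 u\otimes\nabla_0 v\rangle+\langle\nabla_0 v\otimes\nabla_0 u\rangle\big)+\tfrac14\big(\langle\nabla_0 u,\nabla_0 v\rangle+\lambda\langle u,v\rangle\big)g ,
\]
which comes from varying the metric in the Dirichlet energy and in the volume form. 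The splitting fails for every direction $h$ only if, for every orthonormal pair, $T(u,u)-T(v,v)\equiv0$ and $T(u,v)\equiv0$. If that can be excluded, one chooses $h$ with $Q_h$ not a multiple of the identity on the eigenspace. This strictly lowers the multiplicity, and iterating finitely many times places a nearby metric in $\mathcal{A}_N$.

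\textbf{Step 4: the main obstacle.} The hard step is excluding the degenerate identities $T(u,v)\equiv0$ and $T(u,u)\equiv T(v,v)$. I would first take traces and combine them with the eigen-equation. This should give pointwise identities: $\nabla_0 u$ and $\nabla_0 v$ are pointwise orthogonal with equal norms, and the dimension-dependent trace terms then force $|\nabla_0 u|^2$ to be proportional to $|u|^2$. Next, I would pass to the span $W$ of the eigenspace. These identities say that $\nabla_0$ acts on $W$ essentially as a flat, holonomy-equivariant frame: the sections of $W$ would be parallel up to a common scalar factor with respect to a reduction of $\nabla_0$. The final contradiction would come from the genericity of $\nabla_0$. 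Within $\widetilde{\mathrm{Conn}(E)}$ one may additionally require (still a residual condition) that $\nabla_0$ is irreducible and has curvature not preserving any nontrivial parallel subframe. Unique continuation for $\mathscr{L}$ then propagates the local pointwise identities to all of $M$, forcing $W$ to be one-dimensional. If this direct route fails, my fallback is Uhlenbeck's transversality theorem applied to
\[
\Phi(g,u,\lambda)=(\mathscr{L}_{g,\nabla_0}-\lambda)u .
\]
Its surjectivity in the $g$-direction reduces to exactly the same non-vanishing of $T$, so Step~4 remains the crux in either approach.
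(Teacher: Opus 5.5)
Your observation that $\nabla_0$ must be generic is correct (the flat trivial bundle shows it). Steps 1--2 and the reduction in Step 3 are standard. The gap is Step 4, which you yourself call the crux and leave as a programme. The pointwise identities from taking traces are not derived. The residuality of ``irreducible with curvature preserving no parallel subframe'' is only asserted. The passage to $\dim W=1$ by unique continuation is not argued.

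More seriously, ingredients of this kind cannot close the argument. On an oriented rank-$2$ bundle, every compatible connection commutes with the fibrewise rotation $J$, because $\mathfrak{so}(2)$ is abelian. Such a connection is irreducible as soon as its curvature is nonzero somewhere. Yet for every metric and every eigensection $u$, the pair $(u,Ju)$ satisfies $T(u,Ju)\equiv 0$ and $T(u,u)\equiv T(Ju,Ju)$, so no metric direction splits the eigenvalue at first order. So irreducibility plus local analysis at the metric $g$ does not exclude the degenerate identities. What you are trying to prove at every metric is essentially the non-degeneracy condition on eigenspaces that earlier metric-perturbation results had to assume. (The same relation $J\mathscr{L}=\mathscr{L}J$ forces even multiplicities for every metric. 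Hence for oriented rank-$2$ bundles $\widetilde{\mathrm{Conn}(E)}$ is in fact empty, and your insistence on a genericity hypothesis is well founded.)

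Also, the trace term of your $T$ is wrong. Varying $\int g^{ij}\langle\nabla_i u,\nabla_j u\rangle\,dv_g\big/\int|u|^2\,dv_g$ gives $+\tfrac12\big(\langle\nabla_0u,\nabla_0v\rangle-\lambda\langle u,v\rangle\big)g$. The test $h=cg$, which must give $\dot\lambda=-c\lambda$, confirms this.

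The missing idea is that the hypothesis you adopted makes Step 4 unnecessary, and this is how the paper argues. Since $\nabla_0\in\widetilde{\mathrm{Conn}(E)}$, there is a metric $g_0$ (the one fixed in Theorem~\ref{teoremaconexão}) for which $\mathscr{L}_{g_0,\nabla_0}$ has simple spectrum. Given any $g\in\mathrm{Met}^k$, the path $g(t)=(1-t)g+tg_0$ consists of metrics for $t$ in a neighbourhood of $[0,1]$. After conjugation by $(dv_{g(t)}/dv_g)^{1/2}$, the operators $\mathscr{L}_{g(t),\nabla_0}$ form a real-analytic family of self-adjoint operators on a fixed Hilbert space, with common domain $\Gamma_{H^2}(E)$ and compact resolvent.

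By Rellich--Kato, the spectrum is described by countably many analytic branches on the whole interval. Any two distinct branches differ at $t=1$, so they meet at only finitely many $t\in[0,1]$. Hence the set of $t$ at which $\mathscr{L}_{g(t),\nabla_0}$ has a multiple eigenvalue is countable. Letting $t\to 0$ outside this set yields metrics with simple spectrum arbitrarily close to $g$. Together with your Steps 1--2, this proves the corollary without any first-variation formula. (The paper's write-up picks $t^*$ close to $1$, which only gives metrics near $g_0$. Density near an arbitrary $g$ requires $t$ near $0$, which the same isolated-zeros argument provides.)
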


As an application of generic simplicity of linear connections we answered a question regarding the generic $G$-simple spectrum of the real Laplace-Beltrami operator on a $G$-principal bundle which was formulated as number 42 in Yau's list of open problems \cite{Yau2024OpenProblems}.

\begin{theorem}
Let $P$ be a principal $G$-bundle over a compact Riemannian manifold $(M,g)$, endowed with a $G$-invariant $C^{k}$-Riemannian metric $\tilde{g}$ such that $\pi$ is a Riemannian submersion with totally geodesic fibers. Then, the set of Riemannian metric such that the non-zero eigenvalues of Laplace-Beltrami operator $\Delta_{\tilde{g}}$ are $G$-simple is a residual set.
\end{theorem}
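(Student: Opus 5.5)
Let $\pi\colon P\to M$ be the principal $G$-bundle and $\tilde g$ a $G$-invariant metric making $\pi$ a Riemannian submersion with totally geodesic fibers. The plan is to reduce $G$-simplicity of $\Delta_{\tilde g}$ to ordinary simplicity of connection Laplacians on associated vector bundles, and then apply Theorem~\ref{teoremaconexão}.

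\emph{Step 1 (decomposition).} By Peter--Weyl, $L^2(P)=\widehat{\bigoplus}_{\rho\in\widehat G} L^2(P)_\rho$, where $L^2(P)_\rho$ is the $\rho$-isotypic component for the right $G$-action. Each $L^2(P)_\rho$ is isomorphic to $\Gamma_{L^2}(E_\rho)\otimes V_\rho^*$, where $E_\rho=P\times_G V_\rho$ is the associated bundle. Since $\tilde g$ is $G$-invariant, it determines a connection $\omega$ on $P$: the horizontal space is the orthogonal complement of the fibers. The totally geodesic hypothesis implies that $\tilde g$ restricted to each fiber is a bi-invariant metric on $G$ (up to the fiber identification). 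Hence Bérard-Bergery--Bourguignon gives
\[
\Delta_{\tilde g}=\Delta_H+\Delta_V ,
\]
where $\Delta_H$ and $\Delta_V$ commute. On $L^2(P)_\rho$, the vertical part $\Delta_V$ acts by the Casimir constant $c_\rho$, and $\Delta_H$ acts as $\mathscr L_{\nabla^\rho}\otimes \mathrm{id}$. Here $\mathscr L_{\nabla^\rho}$ is the connection Laplacian of the connection $\nabla^\rho$ induced by $\omega$ on $E_\rho$, which is compatible with a $G$-invariant inner product on $V_\rho$. So the spectrum of $\Delta_{\tilde g}$ is $\bigcup_\rho\{\lambda+c_\rho:\lambda\in\operatorname{spec}\mathscr L_{\nabla^\rho}\}$. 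By definition, $G$-simplicity of an eigenvalue means its eigenspace is an irreducible $G$-module. This holds exactly when (a) each $\mathscr L_{\nabla^\rho}$ has simple spectrum, so that the eigenspace is $\phi\otimes V_\rho^*\cong V_\rho^*$, and (b) no eigenvalue is shared between different $\rho,\rho'$, i.e. $\lambda+c_\rho\neq\lambda'+c_{\rho'}$.

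\emph{Step 2 (parametrization).} $G$-invariant metrics with the stated properties, with the fiber metric and $g$ fixed, correspond bijectively to $C^k$ connections $\omega$ on $P$. A perturbation $\omega+\epsilon A$, with $A\in\Omega^1(M,\mathrm{ad}P)$, perturbs every $\nabla^\rho$ simultaneously by $d\rho(A)$.

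\emph{Step 3 (genericity).}
\begin{itemize}
\item For each fixed $\rho$, I would rerun the argument of Theorem~\ref{teoremaconexão} restricted to perturbations of the form $d\rho(A)$, rather than arbitrary skew endomorphisms. The set $\mathcal O_{\rho,N}$ of $\omega$ for which the first $N$ eigenvalues of $\mathscr L_{\nabla^\rho}$ are simple is open by continuity of eigenvalues. It is dense by the splitting argument: a multiple eigenvalue with eigensections $\phi_1,\phi_2$ splits under $A$ unless the quadratic form $\int\langle d\rho(A)\nabla\phi_i,\phi_j\rangle$ (plus its adjoint term) is a multiple of the identity for all $A$.
\item Similarly, the set of $\omega$ avoiding the first $N$ coincidences $\lambda+c_\rho=\lambda'+c_{\rho'}$ between distinct $\rho\neq\rho'$ should be open and dense. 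The first-order variations of the two eigenvalues are given by different functionals of $A$, so one can choose $A$ that moves one eigenvalue but not the other.
\item Countable intersection over $\rho$ and $N$ then gives a residual set, by Baire in the $C^k$ topology.
\end{itemize}

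\emph{Main obstacle.} The hard step is the density part of Step 3, for two reasons. First, the perturbations are restricted to the image of $\mathrm{ad}P$, not all of $\mathfrak{so}(E_\rho)$. One must show that the vanishing of
\[
\operatorname{Re}\int\langle d\rho(A)\nabla\phi_1,\phi_2\rangle
\]
for all $A$ forces the pointwise identity $\langle d\rho(\xi)\nabla\phi_1,\phi_2\rangle=0$ for all $\xi\in\mathfrak g$. Irreducibility of $\rho$ and unique continuation for eigensections should then yield a contradiction. Second, the trivial representation gives $\Delta_M$ on functions, which is independent of $\omega$. The statement excludes only zero, so nonzero base eigenvalues need separate handling. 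I would try to treat them via the coincidence argument, combined with the known generic simplicity of $\Delta_M$, or else they will need extra hypotheses.
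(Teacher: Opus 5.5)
Your reduction follows the paper's route. You decompose into isotypic components, write $\Delta_{\tilde g}=\Delta_h+\Delta_v$ with $\Delta_v$ acting by a Casimir constant, require simplicity of each $\mathscr L_{\nabla^\rho}$, and separate different representations. The paper gets simplicity from Theorem 1 and its corollary, and separates $\sigma_1\not\cong\sigma_2$ by applying Theorem 1 to $E_{\sigma_1}\oplus E_{\sigma_2}$. You correctly see what that argument passes over: only perturbations $d\rho(A)$, $A\in\Omega^1(M,\mathrm{ad}\,P)$, acting on all $E_\rho$ at once, come from metrics on $P$, whereas a generic connection on $E_{\sigma_1}\oplus E_{\sigma_2}$ mixes the summands. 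However, your criterion (a)+(b) is wrong for representations not of real type, so part of Step 3 aims at a false statement. If $D_\rho=\mathrm{End}_G(V_\rho)$ is $\mathbb{C}$ or $\mathbb{H}$, it acts on $E_\rho$ by orthogonal endomorphisms that are parallel for every connection induced from $P$ and commute with every $d\rho(A)$. Hence each eigenspace of $\mathscr L_{\nabla^\rho}$ is a $D_\rho$-module for every $\omega$. Concretely, take $G=U(1)$ and $\rho$ the rotation representation on $\mathbb{R}^2$. Then $E_\rho$ is an oriented rank-$2$ bundle, $J$ is parallel, and on $\mathrm{span}\{\phi,J\phi\}$ the first-order form is scalar for every $A$. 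So (a) never holds, although the $\rho$-part of the real eigenspace is irreducible exactly when $\ker(\mathscr L_{\nabla^\rho}-\mu)$ has complex dimension one. Working over $\mathbb{C}$ only relocates the problem. Conjugation exchanges the $\rho$- and $\bar\rho$-components, so (b) fails for $\rho'=\bar\rho$ for every $\omega$. For quaternionic $\rho$, an antilinear $j$ commuting with $\mathscr L_{\nabla^\rho}$ makes all eigenspaces even-dimensional. The right target is: for each real irreducible $\rho$, $\ker(\mathscr L_{\nabla^\rho}-\mu)$ is one-dimensional over $D_\rho$, and non-isomorphic real irreducibles share no eigenvalue. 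Accordingly, irreducibility plus unique continuation cannot yield a contradiction. Non-splitting imposes two pointwise conditions: equality of the diagonal currents $\langle d\rho(\xi)\nabla_X\phi_i,\phi_i\rangle$, and vanishing of the symmetrized off-diagonal current. Your displayed identity drops both the adjoint term and the diagonal condition. The pair $\phi_2=J\phi_1$ satisfies both conditions. What has to be proved is that non-splitting forces $\phi_2\in D_\rho\,\phi_1$.

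That step is the real content of the theorem, and your proposal does not supply it. Nor can you borrow the argument of Theorem 1. Besides using arbitrary skew perturbations, it rests on Lemma~\ref{simultaneously}, and the pair $(\phi,J\phi)$ in an oriented rank-$2$ bundle is wedge rigid, rigid and infinitesimally rigid; so Theorem 1 itself fails in that case. The cross-coincidence bullet is likewise only asserted: you must show that the two first-order functionals of $A$ differ. For the trivial representation that functional is identically zero, since $\Delta_M$ does not see $\omega$. You would therefore need $A\mapsto\dot\lambda_\rho(A)\not\equiv0$, i.e.\ that the current $\langle d\rho(\cdot)\phi,\nabla\phi\rangle$ is not identically zero, which requires an argument. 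As you note, multiple eigenvalues of $\Delta_M$ force $g$ to vary, which your Step 2 excludes. So the parameter space, and the interplay of base and connection perturbations, have to be set up anew. Finally, totally geodesic fibers do not force a bi-invariant fiber metric: $M\times G$ with the product of $g$ and a non-bi-invariant right-invariant metric on $G$ is a counterexample. Bi-invariance is an extra assumption, as in the paper, and it is precisely what makes $\Delta_v$ act by the scalar $c_\rho$.
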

\section*{Outline of the Proof}
We will analyze the behavior of these eigenvalue curves under a variations of the connection given by $\nabla(t)=\nabla + t\mathscr{A}$, for some $\mathscr{A} \in \Omega^{1}(M, \mathrm{End}(E))$. Our goal is to prove that these analytic eigenvalue curves do not intersect locally. The argument proceeds as follows.

\begin{enumerate}

\item First, we show that if all eigenvalue curves $\lambda_{i}(t), \lambda_{j}(t)$ coincide for all indices $i,j$, then the \textit{wedge rigid hypothesis} is satisfied. That is,
$$
\lambda_{i}(t) \equiv \lambda_{j}(t) \ \text{for all } i,j 
\quad \Longleftrightarrow \quad 
\text{The eigensection of $\mathscr{L}$ are wedge rigid}
$$

\item Next, assuming that the \textit{wedge rigid hypothesis} is satisfied, we obtain \textit{rigid} and \textit{infinitesimal rigid} hypothesis on the associated eigenspaces. In other words,
$$
\text{wedge rigid hypothesis} \quad \Longrightarrow \quad \text{\textit{rigid} and \textit{infinitesimal rigid} on eigenspaces.}
$$

\item On the other hand, if the \textit{wedge rigid hypothesis}  holds and therefore implies \textit{rigid} and \textit{infinitesimal rigid}, a technical lemma \eqref{TeseMarrocos} shows that these conditions cannot occur simultaneously. Thus, the conjunction of \textit{rigid} and \textit{infinitesimal rigid} leads to a contradiction.

\item Consequently, \textit{wedge rigid hypothesis} cannot hold. Returning to the first step, we conclude that it is not possible that all eigenvalue curves coincide for all pair $(i, j)$. Therefore, there exists at least one pair $(i,j)$ with $i \neq j$ such that
$$
\dot{\lambda}_{i}(t) \neq\dot{\lambda}_{j}(t),
$$
and, in particular, $\lambda_{i}(t) \neq \lambda_{j}(t)$ for some $t$.

\item Hence, we obtain the spliting of the eigenvalue curves. This separation directly implies the density of the set of connections with simple spectrum. The openness follows from the continuous dependence of the eigenvalues on the parameter $\nabla$.
\end{enumerate}

\textbf{Organization of the paper.}
Section \ref{preliminaries} introduces the geometric setting and recalls basic concepts about Connection Laplacian operator on vector bundles. In
Section \ref{technicallemmas}, we obtain the variation formulas for the Connection Laplacian under perturbations of the linear connections and technical lemmas, in special the Splitting Eigenvalue Lemma. These lemmas are fundamental to obtain the split of eigenvalues of the Connection Laplacian operators defined on vector bundle $E$. In the Section \ref{sectionmainresults}
contains the proof of the generic simplicity theorem for linear connections. In the last section, we obtain an answer to Yau's problem 42 and prove the $G$-simplicity for the Laplace-Beltrami operator.

\section{Preliminaries and Notations}\label{preliminaries}

Let $E$ be a smooth vector bundle of rank $m$ over a compact Riemannian manifold $(M,g)$ without boundary of dimension $n$, endowed with the inner product $\langle \cdot, \cdot \rangle_{E}$ defined on the fibers $V$. Denote the space of smooth sections on $E$ by $\Gamma(E)$. We can endowed the smooth vector bundle $E$ with linear connection $\nabla : \Gamma(E) \to \Gamma(T^{*}M \otimes E)$ on $E$ compatible with inner product on the fibers, where 
$$
\Gamma(T^{*}M \otimes E) \cong \Omega^{1}(M, E)
$$
is the set of the $E$-valued $1$-forms defined on Riemannian manifold $M$. 

We will denote by $\mathrm{Conn}(E)$ the space of linear connections of the vector bundle $E$, which is an affine space for the vector space $\Omega^{1}(M, \mathrm{End}(E))$, more precisely
\begin{equation}\label{connectionmodeled}
\mathrm{Conn}(E):= \widetilde{\nabla} + \Omega^1(M; \mathrm{End}(E)),\,\,\mathcal{T}_{\nabla}\mathrm{Conn}(E) \cong   \Omega^1(M; \mathrm{End}(E)),\,\, \text{for all} \ \nabla \in \mathrm{Conn}(E)\end{equation}
where $\Omega^1(M; \mathrm{End}(E))$ is the space of the $\mathrm{End}(E)$-valued  differential $1$-forms defined on $M$. 

In this context, we define the inner product on $\Omega^{1}(M, E)$ as 
\begin{equation}\label{innerproductformsE}
\langle \theta_{1} \otimes u, \theta_{2} \otimes v \rangle_{x} = g^{*}( \theta_{1}, \theta_{2}) \cdot  \langle u, v \rangle_{E}, \quad x \in M ,    
\end{equation}
and extended it linearly to arbitrary element of $\Omega^{1}(M, E)$. Moreover, the inner product $\langle \cdot, \cdot \rangle_{E}$ on the fibers naturally induces a global $L^{2}$-inner product on the space $\Gamma(E)$ of smooth sections:
\begin{equation}\label{L2innerproductonsections}
    \langle u, v\rangle_{L^{2}(E)} \ := \int_{M} \langle u,v \rangle_{E} \ dv_{g}, \quad \text{for all} \, u,v \in \Gamma(E)
\end{equation}
where $dv_{g}$ is volume form induced by the Riemannian metric $g$ defined on $M$. Consequently, define $\Gamma_{L^{2}}(E)$ to be the Hilbert space given by the completion of $\Gamma(E)$ with respect to the global $L^{2}$-inner product defined above in (\ref{L2innerproductonsections}).

In order to obtain the inner product on the space
$$
T^{*}M \otimes \mathrm{End}(E),
$$
we first prove the following proposition in the more general setting of an arbitrary space $\mathrm{End}(V)$. The desired statement will then follow as a direct consequence.

\begin{proposition}\label{lematensorial}
Let $(V,\langle\cdot,\cdot\rangle)$ be a $n$-dimensional real vector space with inner product. For every vectors $a,b\in V$,  consider the tensorial operator
$$
a\otimes b \in \mathrm{End}(V)
$$
by
$$
(a\otimes b)(w):=\langle w,b\rangle\,a , \qquad w\in V .
$$
Then, for every $H\in \mathrm{End}(V)$,
$$
\langle\!\langle H,\,a\otimes b\rangle\!\rangle_{\mathrm{End}(V)}
=
\langle H(b),\,a\rangle ,
$$
where $\langle\!\langle\cdot,\cdot\rangle\!\rangle_{\mathrm{End}(V)}$ denotes the Hilbert-Schmidt inner product on $\mathrm{End}(V)$, defined by
$$
\langle\!\langle S,T\rangle\!\rangle_{\mathrm{End}(V)}
:=
\operatorname{tr}\!\left(S\circ T^{*}\right).
$$
where $T^{*}$ is the formal adjoint of tensor $T$. 

\begin{proof} Let $u_{1}, u_{2} \in V$, and define the following bounded linear operator on $V$ given by 
\begin{equation*}
u_{1}\otimes u_{2}, \quad (u_{1}\otimes u_{2})(v)=\langle v, u_{2}\rangle u_{1} \quad \forall v \in V. 
\end{equation*}

Note that if $u_{1}, u_{2}$ are nonzero, then the rank of $u_{1}\otimes u_{2}$ is one-dimensional. Thus, $u_{1}\otimes u_{2}$ is referred to as a rank-one operator on $V$. 

For all $u_{1},u_{2} \in V$. Then, 
\begin{equation}\label{innerinV}
\langle (a\otimes b)(w_{1}),w_{2}\rangle
=
\langle \langle w_{1},b\rangle a,w_{2}\rangle
=
\langle w_{1},b\rangle\,\langle a,w_{2}\rangle
=
\langle w_{1},(b\otimes a)(w_{2})\rangle .
\end{equation}
where $\langle \cdot, \cdot, \rangle$ is the inner product on the $n$-dimensional vector space. In other words, the relation \eqref{innerinV} prove that the tensorial product $u_{1}\otimes u_{2}$ is a selfadjoint operator. More precisely,
$$
(a\otimes b)^{*}=b\otimes a .
$$
From definition of the Hilbert Schmidt product between tensors, we have
\begin{equation}\label{otimestrace}
\langle\!\langle H,a\otimes b\rangle\!\rangle_{\mathrm{End}(V)}
=
\mathrm{tr}\!\left(H\circ (a\otimes b)^{*}\right)
=
\mathrm{tr}\!\left(H\circ (b\otimes a)\right).
\end{equation}

Consider $\{e_{1},\dots,e_{n}\}$ an orthonormal basis of vector space $V$. From definition of trace operator, it follows that
\begin{equation}\label{traceHba}
\mathrm{tr}\!\left(H\circ (b\otimes a)\right)
= \sum_{i=1}^{n}
\left\langle (H \circ (b \otimes a))(e_i),\, e_i \right\rangle =
\sum_{i=1}^{n}
\left\langle H\!\left(\langle e_{i},a\rangle b\right),e_{i}\right\rangle
\end{equation}
Since $H$ is linear, we have 
$$
H\!\left(\langle e_{i},a\rangle b\right) =  \langle e_{i},a\rangle H(b)
$$ 
and bilinearity of inner product, it follows that
$$\langle \langle e_i, a\rangle\, H(b),\, e_i \rangle
=
\langle e_i, a\rangle\, \langle H(b),\, e_i \rangle .
$$
Thus, we can rewrite the equation \eqref{traceHba} as
$$
\mathrm{tr}\!\left(H\circ (b\otimes a)\right)
= \sum_{i=1}^{n}
\left\langle (H \circ (b \otimes a))(e_i),\, e_i \right\rangle =
\sum_{i=1}^{n}
\left\langle H\!\left(\langle e_{i},a\rangle b\right),e_{i}\right\rangle
=
\sum_{i=1}^{n}
   \langle e_{i},a\rangle\,\langle H(b),e_{i}\rangle.
$$
By linearity of the inner product in the second argument, the sum can be rewritten as
\begin{equation}\label{Hbei}
\left\langle H(b),\sum_{i=1}^{n}\langle e_{i},a\rangle e_{i}\right\rangle .
\end{equation}
Since $\{e_{i}\}_{i=1}^{n}$ is an orthonormal basis, the following relation holds
$$
a=\sum_{i=1}^{n}\langle e_{i},a\rangle e_{i}.
$$
Consequently, we can rewrite \eqref{Hbei} as
\begin{equation*}
\left\langle H(b),\sum_{i=1}^{n}\langle e_{i},a\rangle e_{i}\right\rangle =
\langle H(b),a\rangle .
\end{equation*}
and using the relation \eqref{traceHba}, we obtain the following equality
\begin{equation}\label{traceotimes}
\mathrm{tr}\!\left(H\circ (b\otimes a)\right)
=
\langle H(b),a\rangle .
\end{equation}
Combining the relations \eqref{traceotimes} and \eqref{otimestrace}, it follows that
$$
\langle\!\langle H,a\otimes b\rangle\!\rangle_{\mathrm{End}(V)}
=
\langle H(b),a\rangle ,
$$
which proves the claim. Particulary, for $a=b=v$, we conclude that
$$
\langle\!\langle H,v\otimes v\rangle\!\rangle_{\mathrm{End}(V)}
=
\langle H(v),v\rangle .
$$
for all $v \in V$.
\end{proof}
\end{proposition}
Now, we would like to use the Hilbert-Schmidt inner product $\left\langle\!\left\langle \cdot , \cdot \right\rangle\!\right\rangle_{\mathrm{End}(V)}$
on $\mathrm{End}(V)$ defined in Proposition above to endow $\mathrm{End}(E)$ with the corresponding fiberwise inner product. Moreover, for all $\mathscr{A}, \mathscr{B} \in \Gamma\!\left(T^{*}M \otimes \mathrm{End}(E)\right)$, we can define the $L^{2}$-inner product in this space as
$$
\left\langle\!\left\langle \mathscr{A}, \mathscr{B} \right\rangle\!\right\rangle_{T^{*}M \otimes \mathrm{End}(E)}
:=
\int_{M}
\sum_{i=1}^{n}
\left\langle
\mathscr{A}_{X_{i}},
\mathscr{B}_{X_{i}}
\right\rangle_{\mathrm{End}(E)}
\, dv_{g},
$$
where $\left\{ X_{k}\right\}_{k=1}^{n}$ is a local orthonormal frame of $TM$.

\subsection{Connection Laplacian}
The linear connection $\nabla$ defined on vector bundle $E$ has a formal $L^{2}$–adjoint given by
$$
\nabla^{*} : \Gamma(T^{*}M \otimes E) \longrightarrow \Gamma(E),
$$
satisfying the relation
\begin{equation}\label{adjointformalconnection}
\int_{M} \langle \nabla u, \omega \rangle_{x} \, dv_{g}
\;=\;
\int_{M} \langle u, \nabla^{*}\omega \rangle_{E} \, dv_{g},
\qquad
u\in \Gamma(E),\;
\omega \in \Gamma(T^{*}M \otimes E).
\end{equation}
Consider the second covariant derivative
\begin{equation}\label{covariantderivative1}
\nabla^{2}u \in  \Gamma^{\infty}(T^{*}M\otimes T^{*}M \otimes E), \quad (\nabla^{2}u)(X,Y) = \nabla_{X}(\nabla_{Y}u)- \nabla_{\tilde{\nabla}_{X}Y}u
\end{equation}
where $\tilde{\nabla}$ is the Levi-Civita connection defined on Riemannian manifold $(M,g)$. Consequently, for a local orthonormal frame $\{X_{i}\}_{i=1}^{n}$ defined on $M$, it follows that
\begin{equation}
    \mathrm{tr}_{g}(\nabla^{2}u) = \sum^{n}_{i=1}(\nabla^{2}u)(X_{i}, Y_{i}) = \nabla_{X_{i}}(\nabla_{Y_{i}}u)- \nabla_{\tilde{\nabla}_{X_{i}}Y_{i}}u.
\end{equation}
For all $u, v \in \Gamma(E)$, we have 
$$
\langle \operatorname{tr}\nabla^{2} u, v \rangle
= \sum_{i=1}^{n}
  \left\langle \nabla_{X_{i}} \nabla_{X_{i}} u
   - \nabla_{\nabla_{X_{i}} X_{i}} u,\;
           v \right\rangle.
$$
From compatibility condition of the connection with inner product on the fibers, it follows that
$$
\langle \nabla_{X_{i}} \nabla_{X_{i}} u, v \rangle
 = X_{i} \langle \nabla_{X_{i}} u, v \rangle
   - \langle \nabla_{X_{i}} u, \nabla_{X_{i}} v \rangle.
$$
Now, taking the sum with respect to index $i$, 
$$
\langle \operatorname{tr}\nabla^{2} u, v \rangle
 = \sum_{i=1}^{n} X_{i} \langle \nabla_{X_{i}} u, v \rangle
   - \sum_{i=1}^{n} \langle \nabla_{X_{i}} u, \nabla_{X_{i}} v \rangle
   - \sum_{i=1}^{n} \langle \nabla_{\nabla_{X_{i}} X_{i}} u, v \rangle.
$$
On the other hand, consider the vector field $X$ defined by the relation
\begin{equation}
g(X,Y) = \langle \nabla_{Y} u, v \rangle,
\qquad Y \in TM.
\end{equation}
In particular, we obtain 
$$
g(X,X_{i}) = \langle \nabla_{X_{i}} u, v \rangle \quad \text{and} \quad g(X,\nabla_{X_{i}} X_{i}) = \langle \nabla_{\nabla_{X_{i}} X_{i}} u, v \rangle
$$ and since
$$
X_{i} \langle \nabla_{X_{i}} u, v \rangle
= g(\nabla_{X_{i}} X, X_{i}) + g(X, \nabla_{X_{i}} X_{i})
$$
it follows that
\begin{equation}
\langle \operatorname{tr}\nabla^{2} u, v \rangle_{E}
= \sum_{i=1}^{n} g(\nabla_{X_{i}} X, X_{i})
  - \sum_{i=1}^{n} \langle \nabla_{X_{i}} u, \nabla_{X_{i}} v \rangle_{E}.
\end{equation}
Recall that $\operatorname{div} X = \sum_{i=1}^{n} g(\nabla_{X_{i}} X, X_{i})$, we can conclude that
\begin{equation}
\langle \operatorname{tr}\nabla^{2} u, v \rangle_{E}
= - \langle \nabla u, \nabla v \rangle_{x} + \operatorname{div} X.
\end{equation}
Integrating over closed Riemannian manifold $M$ and using the divergence theorem, we have
\begin{equation}\label{tracecovariant2}
\int_{M} \langle \operatorname{tr}\nabla^{2} u, v \rangle_{E} \, dv_{g}
= - \int_{M} \langle \nabla u, \nabla v \rangle_{x} \, dv_{g}.
\end{equation}
From relation (\ref{adjointformalconnection}), it follows that
\begin{equation}\label{adjointformal2}
\int_{M} \langle \nabla^{*} \nabla u, v \rangle \, dv_{g}
 = \int_{M} \langle \nabla u, \nabla v \rangle \, dv_{g},
\end{equation}
and a comparison of the equations (\ref{tracecovariant2}) and (\ref{adjointformalconnection}), we obtain
\begin{equation}
\int_{M} \langle \operatorname{tr}\nabla^{2} u, v \rangle \, dv_{g}
 = - \int_{M} \langle \nabla^{*} \nabla u, v \rangle \, dv_{g}.
\end{equation}
for all $v \in \Gamma(E)$. Consequently, we have $\nabla^{*} \nabla = - \operatorname{tr}\nabla^{2}$. 

Thus, we are now in a position to define the \textit{Connection Laplacian} operator $\mathscr{L}_{g}: \Gamma_{L^{2}}(E) \to \Gamma_{L^{2}}(E)$ acting on space of the smooth section of $E$ is given by
\begin{equation}\label{connectiondefinition}
\mathscr{L}_{g}= \nabla^{*}\nabla = - \sum_{i=1}^{n} \left( \nabla_{X_{i}}\nabla_{X_{i}} - \nabla_{\mathrm{D}_{X_{i}} X_{i}} \right).   
\end{equation}
It is a positive, formally self-adjoint elliptic operator of second order with discrete spectrum consisting of non-negative eigenvalue acumulating only at infinity, in other words
\begin{equation}
(0 \leq) \ \lambda_{1} \leq \lambda_{2} \leq \cdots \leq \lambda_{k} \leq \cdots \uparrow \infty.
\end{equation}
where each eigenvalue being repeated as many times as its multiplicity indicates. By construction, the differential operator $\mathscr{L}_{g}$ depends analytically on linear connection $\nabla$ defined on vector bundle $E$. Let's denote the spectrum of $\mathscr{L}_{g}$ by 
$$
\operatorname{spec}_{L^2}(\mathscr{L}_{g}) := \left(\lambda_{j}(\nabla) \right)_{j \geq 0} \, ; \quad  \lambda_{0} \geq 0.
$$
The Connection Laplacian operator has discrete spectrum. Then, there exists $\varepsilon>0$ such that $\lambda$ is the only eigenvalue on the open interval $(\lambda -\varepsilon, \lambda +\varepsilon )$.

We also denote by $\mathrm{m}(\lambda, \nabla)$ the geometric multiplicity of an eigenvalue $\lambda$ associated to an eigensection of $\mathscr{L}_{g}$. This multiplicity can be broken under an small pertubation of the linear connection. 

Let $\lambda$ be a fixed nonzero eigenvalue of $\mathscr{L}$ with finite multiplicity $\ell \geq 2$ and $\nabla$ be a linear connection defined on smooth vector bundle $E$ compatible with inner product on the fibers. Since the operator $\mathscr{L}$ depends analitically of the parameter $\nabla$, there exists an open neighborhood $\mathcal{U}$ in $\mathrm{Conn}(E)$ containing $\nabla$ such that for all $\tilde{\nabla}$ close to $\nabla$, we obtain a curve of the eigenvalues $\{\lambda_{i}\}_{i=1}^{\ell}$ of $\mathscr{L}_{g}$ with respecto to $\tilde{\nabla}$ in interval $(\lambda-\varepsilon, \lambda+\varepsilon)$, satisfying the relation
\begin{equation}
    \sum_{i=1}^{\ell} \mathrm{m}(\lambda_{i}, \tilde{\nabla}) = \mathrm{m}(\lambda, \nabla).
\end{equation}
where $\mathrm{m}(\lambda_{i}, \tilde{\nabla})$ denotes the geometric multiplicity of the eigenvalues $\{\lambda_{i}\}_{i=1}^{\ell}$ of the Connection Laplacian with respect to $\tilde{\nabla}$. 

\section{Technical Lemmas}\label{technicallemmas}
Let $\mathrm{Conn}(E)$ be the space of linear connections defined on vector bundle $E$ compatible with inner product $\langle \cdot, \cdot \rangle_{E}$ on the fibers. Fixe $\nabla \in \mathrm{Conn}(E)$ and $\lambda$ a nonzero eigenvalue of the Connection Laplacian operator $\mathscr{L}$ with multiplicity $\ell \geq 2$. Since $\mathrm{Conn}(E)$ is an affine space for the vector space $\Omega^{1}(M, \mathrm{SkewEnd}(E))$,  it is natural to consider an one-parameter analytic family of linear connection given
\begin{equation}\label{curveofconnections}
    \nabla(\cdot) : (-\varepsilon, \varepsilon) \to \mathrm{Conn}(E), \quad \nabla(t) = \nabla + t\mathscr{A}, \quad |t|<\varepsilon, \ \ \text{and} \ \mathscr{A} \in \Omega^{1}(M, \mathrm{SkewEnd}(E)).
\end{equation}
with $\nabla(0)=\nabla$. Let $\{X_{k}\}_{k=1}^{n}$ be a local orthonormal frame of vector fields defined on $(M,g)$. Then, for any vector fields in this frame, it follows that 
\begin{equation}\label{connectioninframe}
\nabla_{X_{i}}(t) = \nabla_{X_{i}} + t \mathscr{A}_{X_{i}}   
\end{equation}
where $\mathscr{A}_{X_{i}} \in \Gamma(\mathrm{SkewEnd}(E))$ is the contraction of $1$-form in $X_{i}$. From definition of Connection Laplacian operator given in \eqref{connectiondefinition}, we have 
\begin{equation}\label{associatedconnectionbundle}
 \mathscr{L}(t) = - \sum_{i=1}^{n}\left(\nabla(t)_{X_{i}}\nabla(t)_{X_{i}} - \nabla(t)_{D_{X_{i}}X_{i}} \right)   
\end{equation}

This allows us to expand each term in the associated Connection Laplacian operator $\mathscr{L}(t)$ in terms of one-parameter family $\nabla(t)=\nabla + t\mathscr{A}$, with $\mathscr{A} \in \Omega^{1}(M, \mathrm{SkewEnd}(E))$.

Using the relation $(\ref{connectioninframe})$, it follows that
\begin{align*}
\nabla(t)_{X_{i}}\nabla(t)_{X_{i}}u &= (\nabla_{X_{i}} + t \mathscr{A}_{X_{i}})(\nabla_{X_{i}}u + t \mathscr{A}_{X_{i}}u)  \\\nonumber
&= \nabla_{X_{i}}(\nabla_{X_{i}}u + t \mathscr{A}_{X_{i}}u) + t\mathscr{A}_{X_{i}}(\nabla_{X_{i}}u + t \mathscr{A}_{X_{i}}u) \\\nonumber
&= \nabla_{X_{i}}\nabla_{X_{i}}u + t\left(\nabla_{X_{i}}(\mathscr{A}_{X_{i}}u) + \mathscr{A}_{X_{i}}(\nabla_{X_{i}}u)\right) + t^{2}\mathscr{A}_{X_{i}}(\mathscr{A}_{X_{i}}u)
\end{align*}
and $\nabla(t)_{D_{X_{i}}X_{i}}u = \nabla_{D_{X_{i}}X_{i}}u + t\mathscr{A}_{D_{X_{i}}X_{i}}u$. Thus, we rewrite the equation \eqref{associatedconnectionbundle} as
\begin{eqnarray*}
\mathscr{L}(t) u &=& - \sum_{i=1}^n 
[
\nabla_{X_{i}} \nabla_{X_{i}} u 
+ t \big(\mathscr{A}_{X_{i}}(\nabla_{X_{i}} u) + \nabla_{X_{i}}(\mathscr{A}_{X_{i}}u)\big)
+ t^{2} \mathscr{A}_{X_{i}}(\mathscr{A}_{X_{i}}u)\\
&\,& - \nabla_{D_{X_{i}} X_{i}} u 
- t \mathscr{A}_{D_{X_{i}} X_{i}} u
]    
\end{eqnarray*}
and rearranging the equation in terms of $t$, we have
\begin{eqnarray}\label{4.8}
\mathscr{L}(t) u 
&=& - \sum_{i=1}^n 
(\nabla_{X_{i}} \nabla_{X_{i}} u - \nabla_{D_{X_{i}} X_{i}} u)
- t \sum_{i=1}^n 
( \mathscr{A}_{X_{i}}(\nabla_{X_{i}} u) + \nabla_{X_{i}}(\mathscr{A}_{X_{i}}u) - \mathscr{A}_{D_{X_{i}} X_{i}} u )\nonumber\\
&-& t^{2} \sum_{i=1}^n \mathscr{A}_{X_{i}}(\mathscr{A}_{X_{i}}u).
\end{eqnarray}

Now, given that the relation \eqref{connectiondefinition} holds,
we rewrite the equation \eqref{4.8} as
$$
\mathscr{L}(t) u = \mathscr{L}u 
- t \sum_{i=1}^n 
\left( \mathscr{A}_{X_{i}}(\nabla_{X_{i}} u) + \nabla_{X_{i}}(\mathscr{A}_{X_{i}}u) - \mathscr{A}_{D_{X_{i}} X_{i}} u \right)
- t^{2} \sum_{i=1}^n \mathscr{A}_{X_{i}}(\mathscr{A}_{X_{i}}u),
$$
and we obtain an explicit variation formula for the Connection Laplacian operator along to one-parameter family of the linear connections $\nabla(t)$ given by

\begin{equation*}
\dot{\mathscr{L}}u 
= - \sum_{i=1}^{n} 
\left(
\mathscr{A}_{X_{i}}\!\left(\nabla_{X_{i}}u\right)
+ \nabla_{X_{i}}\,\left(\mathscr{A}_{X_{i}}u\right)
- \mathscr{A}_{D_{X_{i}}X_{i}}u
\right),
\end{equation*}
for every smooth section $u \in \Gamma(E)$, where 
$\{X_{k}\}_{k=1}^{n}$ is a local orthonormal frame of vector fields defined on $(M,g)$.

The next proposition gives a formula for its derivative along any one-parameter family of linear connection.

\begin{proposition}\label{variationintegral}Let
$$
\nabla(t): (-\varepsilon, \varepsilon) \to \mathrm{Conn}(E), \quad \nabla(t) = \nabla + t\mathscr{A}, \quad |t|<\varepsilon, \ \ \text{and} \ \mathscr{A} \in \Omega^{1}(M, \mathrm{SkewEnd}(E)).
$$
be an one-parameter analytic family of linear connection defined on smooth vector bundle $E$ over a Riemannian manifold $(M,g)$. Then, the infinitesimal variation of eigenvalue for Connection Laplacian operator along to $\nabla(t)$ is given by
\begin{equation*}
\frac{d}{dt}\Big|_{t=0} \lambda(t) = 2\int_{M} \langle\!\langle \mathscr{A},\,  \nabla u\wedge u\rangle\!\rangle_{T^{*}M \otimes \mathrm{End}(E)}\,dv_{g}    
\end{equation*}

\begin{proof} 
Using the relation \eqref{4.8} and differentiating with respect to $t=0$, we have
\begin{align*}
&\int_{M} \frac{d}{dt}\Big|_{t=0} 
   \langle \mathscr{L}(t) u(t), u(t) \rangle_{E} \, dv_{g}
=\sum_{i=1}^{n} \int_{M} \frac{d}{dt}\Big|_{t=0} 
    \langle \nabla(t)_{X_{i}}u(t), \nabla(t)_{X_{i}}u(t) \rangle_{E} \ dv_{g}
\end{align*}
Consequently,
\begin{eqnarray*}
\int_{M} \left\langle \frac{d}{dt}\Big|_{t=0} \lambda(t)u, u \right\rangle_{E}\, dv_{g}
&=&  2 \sum_{i=1}^{n}  \int_{M}\langle \dot{\nabla}_{X_{i}}u, \nabla_{X_{i}} u \rangle_{E}\, dv_{g} \\
&=& 2\sum_{i=1}^{n}  \int_{M} \langle \mathscr{A}_{X_{i}} u, \nabla_{X_{i}} u \rangle_{E}\, dv_{g}.\\
\end{eqnarray*}
From Lemma \ref{lematensorial}, we can rewrite the relation above as 
$$
   \frac{d}{dt}\Big|_{t=0} \lambda(t) = 2\sum_{i=1}^{n} \int_{M} \langle\!\langle \mathscr{A}_{X_{i}}, \nabla_{X_{i}} u \otimes u \rangle\!\rangle_{\mathrm{End}(E)} \, dv_{g}, \qquad \text{for all} \ \mathscr{A} \in  \Omega^{1}(M, \mathrm{SkewEnd}(E))
$$
where $\langle\!\langle \cdot, \cdot\rangle\!\rangle_{E}$ is the Hilbert-Schmidt inner product defined on $\mathrm{SkewEnd}(E)$.

Now, since the $(0,2)$-tensors $\nabla_{X_{i}}u\,\otimes\, u$ admits orthogonal decomposition into symmetric $\mathrm{Sym}(\nabla_{X_{i}}u\,\otimes u)$ and skew-symmetric $\mathrm{Alt}(\nabla_{X_{i}}u\,\otimes u)$ component, for any $\{X_{i}\}_{i=1}^{n}$  local orthonormal framme defined on $M$, it follows that
\begin{eqnarray*}
 \frac{d}{dt}\Big|_{t=0} \lambda(t) &=& 2\sum_{i=1}^{n} \int_{M} \langle\!\langle \mathscr{A}_{X_{i}},\, \nabla_{X_{i}}u\otimes u\rangle\!\rangle_{\mathrm{End}(E)}\,dv_{g}, \nonumber\\
&=& 2\sum_{i=1}^{n} \int_{M}\Big\langle\!\!\Big\langle \mathscr{A}_{X_{i}},\,
\mathrm{Sym}(\nabla_{X_{i}}u\otimes u)
+\mathrm{Alt}(\nabla_{X_{i}}u\otimes u)
\Big\rangle\!\!\Big\rangle_{\mathrm{End}(E)}\,dv_{g},
\quad \nonumber\\
&=& 2\sum_{i=1}^{n} \int_{M}\langle\!\langle \mathscr{A}_{X_{i}},\, \mathrm{Sym}(\nabla_{X_{i}}u\otimes u)\rangle\!\rangle_{\mathrm{End}(E)}
+ \langle\!\langle \mathscr{A}_{X_{i}},\, \mathrm{Alt}(\nabla_{X_{i}}u\otimes u)\rangle\!\rangle_{\mathrm{End}(E)}\,dv_{g},
\nonumber\\
&=& 2\sum_{i=1}^{n} \int_{M}\langle\!\langle \mathscr{A}_{X_{i}},\, \mathrm{Alt}(\nabla_{X_{i}}u\otimes u)\rangle\!\rangle_{\mathrm{End}(E)}\,dv_{g},
\qquad \forall\, \mathscr{A}\in  \Omega^{1}(M, \mathrm{SkewEnd}(E)).
\end{eqnarray*}
Moreover, given that $\mathrm{Alt}(u\otimes \nabla_{X_{i}}u) \in \Lambda^{2}(M)$, we have $\mathrm{Alt}(\nabla_{X_{i}}u\otimes u)= u\wedge\nabla_{X_{i}}u$. Therefore, we obtain the relation 
\begin{eqnarray*}
 \frac{d}{dt}\Big|_{t=0} \lambda(t) &=&2 \sum_{i=1}^{n}\int_{M} \langle\!\langle \mathscr{A}_{X_{i}},\,  \nabla_{X_{i}}u\wedge u\rangle\!\rangle_{\mathrm{End}(E)}\,dv_{g}  \\
 &=&2 \int_{M} \langle\!\langle \mathscr{A},\,  \nabla u\wedge u\rangle\!\rangle_{T^{*}M \otimes \mathrm{End}(E)}\,dv_{g}
\end{eqnarray*}
the proposition follows.
\end{proof}    
\end{proposition}

In this direction, we show that the eigenvalues are unstable under the one-parameter family of linear connections defined in \eqref{curveofconnections}. To this end, we analyze the infinitesimal variation of the eigenvalues obtained in the previous proposition. The following result demonstrates that the eigenvalues are sensitive to perturbations of the linear connection. 

\begin{lemma} Under the same assumptions as in the previous proposition, let $u \in \Gamma(E)$ be an eigensection of the Connection Laplacian operator. Then, the linear functional
$$
\mathcal{F} : \Omega^{1}(M,\mathrm{SkewEnd}(E)) \longrightarrow \mathbb{R}, 
\qquad 
\mathcal{F}(\mathscr{A}) :=2\int_{M} \langle\!\langle \mathscr{A} ,\,  \nabla u\wedge u\rangle\!\rangle_{T^{*}M \otimes \mathrm{End}(E)}\,dv_{g}    
$$
is nontrivial whenever $\nabla u \not\equiv 0$, and hence there exists $\mathscr{A}$ with
$\mathcal{F}(\mathscr{A})\neq 0$. More precisely, since the Proposition \ref{variationintegral} holds, then 
$$
\dot{\lambda}(0)=\frac{d}{dt}\Big|_{t=0} \lambda(t) \sim \mathcal{F}(\mathscr{A}) := 2\int_{M} \langle\!\langle \mathscr{A},\,  \nabla u\wedge u\rangle\!\rangle_{T^{*}M \otimes \mathrm{End}(E)}\,dv_{g}    \neq 0. 
$$ \begin{proof}
    Suppose by contradiction that $\mathcal{F}(\mathscr{A})=0$ for all $\mathscr{A} \in  \Omega^{1}(M, \mathrm{SkewEnd}(E))$. Then, 

$$
   \int_{M} \langle\!\langle \mathscr{A},\,  \nabla u\wedge u\rangle\!\rangle_{T^{*}M \otimes \mathrm{End}(E)}\,dv_{g}     = 0\qquad \text{for all} \ \mathscr{A} \in  \Omega^{1}(M, \mathrm{SkewEnd}(E))
$$
where $\langle\!\langle \cdot, \cdot\rangle\!\rangle_{E}$ is the Hilbert-Schmidt inner product defined on $\mathrm{SkewEnd}(E)$.

Since the inner product is positive and well defined and equality holds for all $\mathscr{A} \in  \Omega^{1}(M, \mathrm{SkewEnd}(E))$, it follows that $\nabla u \wedge u$ must satisfy the relation, 
$$
 \nabla u \wedge u = 0, \qquad \text{for all $u \in \Gamma(E)$}.
$$
Using the definition of the inner product defined on $\Lambda^{2}(M)$, for a vector field $X$ defined on $M$, we obtain the relation
\begin{equation}\label{normwedgezero}
\|\nabla_{X}u\wedge u\|_{\mathrm{End}(E)}^{2}
= \|u\|_{E}^{2}\|\nabla_{X}u\|_{E}^{2} - \langle u,\nabla_{X}u\rangle_{E}^{2} = 0.     
\end{equation}
which implies in $ \|u\|_{E}^{2}\|\nabla_{X_{i}}u\|_{E}^{2}= \langle u,\nabla_{X_{i}}u\rangle_{E}^{2}$ and by Cauchy-Schwarz inequality, the relations holds if only if $u$ and $\nabla_{X} u$ are linearly dependent. More precisely, the equality \eqref{normwedgezero} is satisfied iff there exists a constant $\alpha(X)$ such that 
\begin{equation}\label{connectionAu}
\nabla_{X}u = \alpha(X)u
\end{equation}

From relation \eqref{connectionAu}, we will expand the Connection Laplacian operator with respect to the relation $\nabla_{X} u = \alpha(X)\, u$. For this, consider the expression of the Connection Laplacian operator given by
\begin{equation*}
\mathscr{L} = -\sum_{i=1}^{n}\left(\nabla_{X_{i}}\nabla_{X_{i}}u - \nabla_{D_{X_{i}}X_{i}}u\right),
\end{equation*}
where $\{X_{i}\}_{i=1}^{n}$ is a local orthonormal frame and $D$ is the Levi-Civita connection. Since the relation \eqref{connectionAu} holds, we have
\begin{eqnarray*}
\nabla_{X_{i}}\nabla_{X_{i}}u
&=&
\nabla_{X_{i}}\left(\alpha(X_{i})u\right)\\
&=&X_{i}\left(\alpha(X_{i})\right)u + \alpha(X_{i})\nabla_{X_{i}}\,u, \\
&=&
X_{i}\left(\alpha(X_{i})\right)u + |\alpha(X_{i})|^{2}u,
\end{eqnarray*}
and $\nabla_{D_{X_{i}}X_{i}}u = \alpha(D_{X_{i}}X_{i})\,u$. Thus, we may rewrite \eqref{connectioninframe} as
\begin{equation}\label{laplacianoA}
\mathscr{L}u
=
-\sum_{i=1}^{n}
\left(
X_{i}\left(\alpha(X_{i})\right)
-
\alpha(D_{X_{i}}X_{i})
+
|\alpha(X_{i})|^{2}
\right)u.
\end{equation}
By divergence formula, 
\begin{equation*}
\mathrm{div}(\alpha)
=
\sum_{i=1}^{n}
\left[
X_{i}(\alpha(X_{i}))
-
\alpha(D_{X_{i}}X_{i})
\right],
\end{equation*}
and the modulus of $\alpha$ is given by $|\alpha|^{2} = \sum_{i=1}^{n} |\alpha(X_{i})|^{2}$, we have, 
\begin{equation*}
\mathscr{L} u
= -
\left(\mathrm{div}(\alpha) + |\alpha|^{2}\right)u.
\end{equation*}
Since the connection is compatible with inner product $\langle \cdot, \cdot \rangle_{E}$, the following relation holds:
\begin{equation*}
X\left(\|u\|_{E}^{2}\right)=X\langle u,u\rangle_{E} = 2\langle \nabla_{X}u , u\rangle_{E} 
= 2\alpha(X)\|u\|_{E}^{2},
\end{equation*}
For $u\neq 0$, we have
\begin{equation*}
\alpha(X) = \frac{1}{2\|u\|_{E}^{2}}\, X\left(\|u\|_{E}^{2}\right)
= X\left(\log\|u\|^{2}_{E}\right).
\end{equation*}
Thus, $\alpha = \nabla\left(\log\|u\|^{2}_{E}\right)$ and we obtain the following relations
\begin{equation*}
\mathrm{div}(\alpha) = \Delta\left(\log\|u\|^{2}_{E}\right),
\qquad
|\alpha|^{2} = \left|\nabla\log\|u\|_{E}^{2}\right|^{2}.
\end{equation*}
Substituting into \eqref{laplacianoA}, we obtain the Connection Laplacian operator associated to $\nabla_{X}u=\alpha(X)u$ given by
\begin{equation*}
\mathscr{L} u
= -
\left( \Delta\left(\log\|u\|_{E}^{2}\right) + \left|\nabla\log\|u\|_{E}^{2}\right|^{2} \right)u,
\qquad (u\neq 0).
\end{equation*}
Moreover, given that the relation $\mathscr{L} u = \lambda u$ holds and taking the $L^{2}$-inner product with respect to eigensections $u$ and aplying the Divergence Theorem, we have 
\begin{equation*}
\lambda
= - \frac{1}{\mathrm{vol(M)}}\int_{M}(\left|\nabla\log\|u\|_{E}^{2}\right|^{2})\|u\|_{E}^{2}\,dv_{g},
\qquad (u\neq 0).
\end{equation*}
Given that  $\left|\nabla\log\|u\|_{E}^{2}\right|^{2}$ is positive, from equation above, it follows that $\lambda$ is negative. But from hypothesis, we have $\lambda>0$, which implies that $\lambda =0$, but this is absurd. Therefore, $\mathcal{F}(\mathscr{A})\neq 0$ for all skew-symmetric $\mathscr{A} \in \Omega^{1}(M, \mathrm{SkewEnd}(E))$.
\end{proof}   
\end{lemma}
In the next result, we prove a splitting lemma for the eigenvalues of the Connection Laplacian operator. The proof follows techniques developed by Blecker and Wilson \cite{BleeckerWilson1980}.

\begin{lemma}\label{lemmaconnection} 
Let $\lambda$ be a non-zero eigenvalue of Connection Laplacian operator $\mathscr{L}$ of multiplicity $\ell \geq 2$. Then, there exists $\mathscr{A} \in \Omega^{1}(M,\mathrm{SkewEnd}(E))$ and $\varepsilon > 0$ such that among the perturbed eigenvalues $\lambda_{1}(t), \ldots, \lambda_{\ell}(t)$ of $\mathscr{L}(t)$ there exists a pair $(i,j)$ for which $\lambda_{i}(t) \neq \lambda_{j}(t)$ for all $|t|<\varepsilon$.

\begin{proof}
Let $E$ be a smooth vector bundle over a closed Riemannian manifold $M$ of finite rank $m$. Suppose that the linear connection $\nabla$ is compatible with inner prduct on the fibers, that is, $X\langle u, v\rangle = \langle \nabla_{X}u, v\rangle + \langle u, \nabla_{X}v\rangle$ for all $X \in \Gamma(TM)$ and $u,v \in \Gamma_{L^{2}}(E)$.

Consider a one–parameter family of linear connections defined on smooth vector bundle $E$ given by
$$
\nabla(t)=\nabla+t\mathscr{A},\qquad 
\mathscr{A}\in\Omega^{1}(M,\mathrm{SkewEnd}(E)).
$$
Given $\lambda$ a nonzero eigenvalue of $\mathscr{L}_{g}$ with multiplicity $\ell\ge 2$, by Kato's Lemma, there exist $\ell$ analytic curves of eigenvalues $\lambda_{k}(t)$ and eigensections $u_{k}(t)$ satisfying the relation $\mathscr{L}(t)u_{k}(t)=\lambda_{k}(t)u_{k}(t)$, with 
$\lambda_{k}(0)=\lambda$ for each $k=1, 2, \ldots, \ell$.

From variation formula for eigenvalues obtained in Proposition \ref{variationintegral}, we have
\begin{equation}\label{varformula}
\dot{\lambda}_{i}(0)
=2\int_{M}\langle\!\langle \mathscr{A},u\wedge\nabla\, u\rangle\!\rangle_{T^{*}M \otimes \mathrm{End}(E)}\,dv_{g},\qquad
\dot{\lambda}_{j}(0)
=2\int_{M}\langle\!\langle \mathscr{A}, v\wedge\nabla\, v\rangle\!\rangle_{T^{*}M \otimes \mathrm{End}(E)}\,dv_{g}.
\end{equation}

From hypothesis, 
\begin{equation*}
\dot{\lambda}_{i}(0)=\dot{\lambda}_{j}(0)
\qquad\text{for all }\mathscr{A}\in\Omega^{1}(M,\mathrm{SkewEnd}(E)).
\end{equation*}
Consequently, from relation \eqref{varformula}, we have
\begin{equation*}
\int_{M}\langle\!\langle \mathscr{A}, \nabla\, u \wedge u- \nabla\, v \wedge v\,\rangle\!\rangle_{T^{*}M \otimes \mathrm{End}(E)}\, dv_{g}=0
    \qquad\forall\,\mathscr{A}\in\Omega^{1}(M,\mathrm{SkewEnd}(E)).
\end{equation*}
Therefore, 
$$
\langle\!\langle \mathscr{A}, \nabla\, u \wedge u- \nabla\, v \wedge v\,\rangle\!\rangle_{T^{*}M \otimes \mathrm{End}(E)}=0
$$
for all $\mathscr{A} \in \Omega^{1}(M, \mathrm{SkewEnd}(E))$. 

Since the inner product is positive and well defined and holds for all $\mathscr{A} \in \Omega^{1}(M, \mathrm{SkewEnd}(E))$, the equation above implies that 

\begin{equation}\label{wedge-identity}
\nabla_{X}\, u \wedge u = \nabla_{X}\, v \wedge v\,
\end{equation}
for every vector field $X$ on $M$, or equivalently,  $u\wedge\nabla_{X}u = v\wedge\nabla_{X}v$. 

Thus, since the equation \eqref{wedge-identity} holds and $u,v$ are linearly independents, it follows that
\begin{eqnarray*}
u\wedge\nabla_{X}u
    &=& v\wedge\nabla_{X}v \\
 (u\wedge\nabla_{X}u)\wedge u
    &=& (v\wedge\nabla_{X}v)\wedge u \\ 
    -u\wedge u \wedge \nabla_{X} u & = & v \wedge \nabla_{X} v \wedge u \\
0 &=& v \wedge \nabla_{X} v \wedge u
\end{eqnarray*}
which implies that $u,v, \nabla_{X} v$ are linearly dependents. But since $u,v$ are linearly independents, the unique possibility for the last equation to be satisfied is if $\nabla_{X} v$ is a linear combination of smooth sections $u,v$, more specifically, there exists constants $c(X), d(X)$ such that 
$$
\nabla_{X}v= c(X)u+d(X)v
$$
Similarly, there exists constants $a(X), b(X)$ such that 
\begin{equation*}
\nabla_{X}u= a(X)u+b(X)v    
\end{equation*}
Thus, given the following system, 
\begin{equation}\label{decomp}
\left\{
\begin{aligned}
\nabla_{X} u &= a(X)\,u + b(X)\,v,\\
\nabla_{X} v &= c(X)\,u + d(X)\,v.
\end{aligned}
\right.
\end{equation}
where the coeficients $a, b, c, d$ are constant which depends to vector field $X$ defined on Riemannian manifold $(M,g)$. The system \ref{decomp} immediately implies the relation coeficients $b(X)$ and $c(X)$. Indeed, taking the wedge product of both sides of first equation with $u$ and second equation with $v$, we obtain
\begin{eqnarray*}
    \nabla_{X} u\wedge u &=& a(X)\,u\wedge u + b(X)\,v\wedge u  \quad \Rightarrow  \quad \nabla_{X} u\wedge u = b(X)\,v\wedge u\\
\nabla_{X} v\wedge v &=& c(X)\,u\wedge v + d(X)\,v\wedge v \quad \Rightarrow \quad \nabla_{X} v\wedge v = c(X)\,u\wedge v
\end{eqnarray*}
Since \eqref{wedge-identity} holds, it follows that
$$
c(X)\,u\wedge v = b(X)\,v\wedge u \qquad \Longleftrightarrow  \qquad
c(X)\,u\wedge v = -b(X)\,u\wedge v 
$$
and 
\begin{equation}\label{b+c=0}
c(X)+b(X)=0, \qquad \text{for all} \ X \in \Gamma(TM)     
\end{equation}

Now, we would like to use the relation \eqref{decomp} to expand the  connection Laplacian in terms of coeficients $a(X), b(X), c(X), d(X)$ of the system \eqref{decomp}.

By relation \eqref{connectiondefinition}, it follows that
\begin{equation}\label{laplacian}
\mathscr{L} u
=
-\sum_{i=1}^{n}\left(\nabla_{X_{i}}\nabla_{X_{i}}u - \nabla_{D_{X_{i}}X_{i}}u\right),
\end{equation}
where $D$ is the Levi--Civita connection with respect to Riemannian metric $g$ defined on $M$.

From equations given in \eqref{decomp}, we have
$$
\begin{aligned}
\nabla_{X_{i}}\nabla_{X_{i}}u
&= \nabla_{X_{i}}(a(X_{i}) u + b(X_{i}) v) \\
&= X_{i}(a(X_{i}))\,u + a(X_{i})\nabla_{X_{i}}u
   + X_{i}(b(X_{i}))\,v + b(X_{i})\nabla_{X_{i}}v \\
&= \bigl(X_{i}(a(X_{i}))+a(X_{i})^{2}+b(X_{i}) c(X_{i})\bigr)\,u\\
&+ \bigl(X_{i}(b(X_{i}))+a(X_{i}) b(X_{i}) + b(X_{i}) d(X_{i})\bigr)\,v,
\end{aligned}
$$
and $\nabla_{D_{X_{i}}X_{i}}u
=
a(D_{X_{i}}X_{i})\,u + b(D_{X_{i}}X_{i})\,v$. This allows us to rewrite the relation \eqref{laplacian} as
$$
\begin{aligned}
\mathscr{L} u
&=
-\sum_{i}\Bigl[
\bigl(X_{i}(a(X_{i}))-a(D_{X_{i}}X_{i})+a(X_{i})^{2}+b(X_{i}) c(X_{i})\bigr)u
\\&\qquad\qquad\qquad
+
\bigl(X_{i}(b(X_{i}))-b(D_{X_{i}}X_{i})+a(X_{i}) b(X_{i})+b(X_{i}) d(X_{i})\bigr)v
\Bigr].
\end{aligned}
$$
From divergence formula, it follows that
$$
\mathrm{div}(a)
=
\sum_i\bigl(X_{i}(a(X_{i}))-a(D_{X_{i}}X_{i})\bigr),\qquad
\mathrm{div}(b)
=
\sum_i\bigl(X_{i}(b(X_{i}))-b(D_{X_{i}}X_{i})\bigr),
$$
Consequently,
\begin{equation*}
\mathscr{L} u
=
-\Bigl(\mathrm{div}(a)+\sum_i(a(X_{i})^{2}+b(X_{i}) c(X_{i}))\Bigr)u
-\Bigl(\mathrm{div}(b)+\sum_i(a(X_{i}) b(X_{i}) + b(X_{i}) d(X_{i}))\Bigr)v.
\end{equation*}
Using the relation $\nabla^{*}\nabla u = \lambda u$, we have 
$$
\lambda u
=
-\left(\mathrm{div}(a)+\sum_i(a(X_{i})^{2}+b(X_{i}) c(X_{i}))\right)u
-\left(\mathrm{div}(b)+\sum_i(a(X_{i}) b(X_{i}) + b(X_{i}) d(X_{i}))\right)v.
$$
Reorganizing the equation above, 
$$
0
=
\left(\lambda + \mathrm{div}(a)+\sum_i(a(X_{i})^{2}+b(X_{i}) c(X_{i}))\right)u
+
\left(\mathrm{div}(b)+\sum_i(a(X_{i}) b(X_{i}) + b(X_{i}) d(X_{i}))\right)v.
$$
Since the smooth eigensections $u$ and $v$ are linearly independent, the equation above holds if and only if
\begin{equation*}
\begin{cases}
\begin{aligned}
\mathrm{div}(a) + \sum_i(a(X_{i})^{2}+b(X_{i}) c(X_{i})) &= -\lambda,\\[2mm]
\mathrm{div}(b) + \sum_i(a(X_{i}) b(X_{i}) + b(X_{i}) d(X_{i})) &= 0.
\end{aligned}
\end{cases}
\end{equation*}
Integrating with respect to closed manifold $M$ and using the Divergence Theorem, we obtain the identities

\begin{equation}
\begin{cases}
\begin{aligned}\label{firstzero}
\sum_i\int_{M}(a(X_{i})^{2}+b(X_{i}) c(X_{i})) &= -\lambda,\\[2mm]
\sum_i\int_{M}(a(X_{i}) b(X_{i}) + b(X_{i}) d(X_{i})) &= 0.
\end{aligned}
\end{cases}
\end{equation}
Since the relation \eqref{b+c=0} holds, we can rewrite the system \eqref{firstzero} as 
\begin{equation}\label{eq:U}
\begin{cases}
\begin{aligned}
\sum_i\int_{M}(\left|a(X_{i})\right|^{2}-\left|b(X_{i}\right|^{2})) &= -\lambda,\\[2mm]
\sum_i\int_{M}\left\langle b(X_{i}), a(X_{i}) + d(X_{i})\right\rangle &= 0.
\end{aligned}
\end{cases}
\tag{A}
\end{equation}
To better understand the system \eqref{eq:U}, we would like to explicitly obtain each coeficients of the system \eqref{decomp}. For this, taking the inner products of the first equation in \eqref{decomp} with respect to sections $u$ and $v$, it follows that
\begin{equation*}
\left\{
\begin{aligned}
\langle \nabla_{X} u , u \rangle_{E} &= a(X)\,\|u\|_{E}^{2} + b(X)\,\langle u , v \rangle_{E},\\
\langle \nabla_{X} u , v \rangle_{E} &= a(X)\,\langle u , v \rangle_{E} + b(X)\,\|v\|_{E}^{2}.
\end{aligned}
\right.
\end{equation*}
Rewriting in matrix form, we have
$$
\begin{pmatrix}
\langle \nabla_{X} u , u \rangle_{E} \\
\langle \nabla_{X} u , v \rangle_{E}
\end{pmatrix}
=
\begin{pmatrix}
\|u\|_{E}^{2} & \langle u , v \rangle_{E} \\
\langle u , v \rangle_{E} & \|v\|_{E}^{2}
\end{pmatrix}
\begin{pmatrix}
a(X) \\ b(X)
\end{pmatrix}.
$$
Since $\det(M)\neq 0$, we have
$$
\begin{pmatrix}
a(X) \\ b(X)
\end{pmatrix}
=
\frac{1}{\det(M)}
\begin{pmatrix}
\|v\|_{E}^{2} & -\langle u , v \rangle_{E} \\
-\langle u , v \rangle_{E} & \|u\|_{E}^{2}
\end{pmatrix}
\begin{pmatrix}
\langle \nabla_{X} u , u \rangle_{E} \\
\langle \nabla_{X} u , v \rangle_{E}
\end{pmatrix}.
$$
Now, taking the product of the matrices in hand-side, we obtain the relation
\begin{equation}\label{a-b-general}
\begin{aligned}
a(X) &= 
\frac{1}{\det(M)}\left(
\|v\|_{E}^{2} \langle \nabla_{X} u , u \rangle_{E}
- \langle u , v \rangle_{E} \,\langle \nabla_{X} u , v \rangle_{E}
\right),\\
b(X) &= 
\frac{1}{\det(M)}\left(
-\langle u , v \rangle_{E} \,\langle \nabla_{X} u , u \rangle_{E}
+ \|u\|_{E}^{2}\,\langle \nabla_{X} u , v \rangle_{E}
\right).
\end{aligned}
\end{equation}
Moreover, from hypothesis the linear connection $\nabla$ is compatible with metric structure on the fibers, we have
$$
\langle \nabla_{X} u , u \rangle_{E} = \frac{1}{2}\,X\|u\|_{E}^{2},
$$
and we can rewrite the coeficients obtained in \eqref{a-b-general} as
\begin{equation*}
\begin{aligned}
a(X) &= 
\frac{1}{\det(M)}\left(
\frac{1}{2}\|v\|_{E}^{2} X\|u\|_{E}^{2}
- \langle u , v \rangle_{E} \,\langle \nabla_{X} u , v \rangle_{E}
\right),\\[0.2cm]
b(X) &= 
\frac{1}{\det(M)}\left(
-\frac{1}{2}\langle u , v \rangle_{E} X\|u\|_{E}^{2}
+ \|u\|_{E}^{2}\,\langle \nabla_{X} u , v \rangle_{E}
\right).
\end{aligned}
\end{equation*}
Analogously, taking the inner product of second equation in \eqref{decomp} by $u,v$, we have
\begin{equation*}
\left\{
\begin{aligned}
\langle \nabla_{X} v , u \rangle_{E} &= c(X)\,\|u\|_{E}^{2} + d(X)\,\langle u , v \rangle_{E},\\
\langle \nabla_{X} v , v \rangle_{E} &= c(X)\,\langle u , v \rangle_{E} + d(X)\,\|v\|_{E}^{2}.
\end{aligned}
\right.
\end{equation*}
equivalently,
$$
\begin{pmatrix}
\langle \nabla_{X} v , u \rangle_{E}\\
\langle \nabla_{X} v , v \rangle_{E}
\end{pmatrix}
=
\begin{pmatrix}
\|u\|_{E}^{2} & \langle u , v \rangle_{E} \\
\langle u , v \rangle_{E} & \|v\|_{E}^{2}
\end{pmatrix}
\begin{pmatrix}
c(X) \\ d(X)
\end{pmatrix}.
$$
Thus, we can invert the matrix $M$ and obtain the relation
$$
\begin{pmatrix}
c(X) \\ d(X)
\end{pmatrix}=\frac{1}{\det(M)}\begin{pmatrix}
\|v\|_{E}^{2} & -\langle u , v \rangle_{E} \\
-\langle u , v \rangle_{E} & \|u\|_{E}^{2}
\end{pmatrix}\begin{pmatrix}
\langle \nabla_{X} v , u \rangle_{E}\\
\langle \nabla_{X} v , v \rangle_{E}
\end{pmatrix}
$$
and developing the matrix product in hand-side, if follows that
\begin{equation}\label{c-d-general}
\begin{aligned}
c(X) &= \frac{1}{\det(M)}\left(
\|v\|_{E}^{2}\,\langle \nabla_{X} v , u \rangle_{E}
- \langle u , v \rangle_{E} \,\langle \nabla_{X} v , v \rangle_{E}
\right),\\[0.2cm]
d(X) &= \frac{1}{\det(M)}\left(
-\,\langle u , v \rangle_{E} \,\langle \nabla_{X} v , u \rangle_{E}
+ \|u\|_{E}^{2}\,\langle \nabla_{X} v , v \rangle_{E}
\right).
\end{aligned}
\end{equation}
By compatibility of linear connection $\langle \nabla_{X} v , v \rangle_{E} = \frac{1}{2}\,X\|v\|_{E}^{2}$, we can rewrite \eqref{c-d-general} as
\begin{equation*}
\begin{aligned}
c(X) &= \frac{1}{\det(M)}\left(
\|v\|_{E}^{2}\,\langle \nabla_{X} v , u \rangle_{E}
- \frac{1}{2}\,\langle u , v \rangle_{E} \,X\|v\|_{E}^{2}
\right),\\[0.2cm]
d(X) &= \frac{1}{\det(M)}\left(
-\,\langle u , v \rangle_{E} \,\langle \nabla_{X} v , u \rangle_{E}
+ \frac{1}{2}\,\|u\|_{E}^{2}\,X\|v\|_{E}^{2}
\right).
\end{aligned}
\end{equation*}

From relation \eqref{b+c=0} and using the relation obtained to $b(X), c(X)$, we have $b(X)+c(X)=0$ if and only if,
\begin{equation*}
\left(
-\frac{1}{2}\,\langle u , v \rangle_{E} \,X\|u\|_{E}^{2}
+ \|u\|_{E}^{2}\,\langle \nabla_{X} u , v \rangle_{E}
\right) +  \left(
\|v\|_{E}^{2}\,\langle \nabla_{X} v , u \rangle_{E}
- \frac{1}{2}\,\langle u , v \rangle_{E} \,X\|v\|_{E}^{2}
\right) =0
\end{equation*}
thus, 
\begin{equation}\label{step1ab}
\|u\|_{E}^{2}
\left(\langle\nabla_{X} u, v\rangle_{E} - \frac{1}{2} X\langle u,v\rangle_{E}\right)
+
\|v\|_{E}^{2}
\left(\langle\nabla_{X} v, u\rangle_{E} - \frac{1}{2} X\langle u,v\rangle_{E}\right)
=0    
\end{equation}
From compatibily of the linear connection, we have
$$
\frac12 X\langle u , v\rangle_{E} 
= \frac12\left[ \langle \nabla_{X} u , v\rangle_{E} + \langle u , \nabla_{X} v\rangle_{E} \right]
$$
Consequently,
\begin{eqnarray*}
    \langle\nabla_{X} u, v\rangle_{E} - \frac{1}{2} X\langle u,v\rangle_{E} &=& \langle \nabla_{X} u , v\rangle_{E} 
- \frac12 \langle \nabla_{X} u , v\rangle_{E} 
- \frac12 \langle u , \nabla_{X} v\rangle_{E}\\
&=& \frac12 \langle \nabla_{X} u , v\rangle_{E} - \frac12 \langle u , \nabla_{X} v\rangle_{E}.
\end{eqnarray*}
Similarly,
\begin{eqnarray*}
\langle\nabla_{X} v, u\rangle_{E} - \frac{1}{2} X\langle u,v\rangle_{E} &=& \langle \nabla_{X} v , u\rangle_{E} 
- \frac12 \langle \nabla_{X} u , v\rangle_{E} 
- \frac12 \langle u , \nabla_{X} v\rangle_{E} \\
&=& \frac12 \langle \nabla_{X} v , u\rangle_{E} - \frac12 \langle v , \nabla_{X} u\rangle_{E}.
\end{eqnarray*}
Thus, we can rewrite the equation \eqref{step1ab} as
\begin{align*}
\frac12 \|u\|_{E}^{2} \langle \nabla_{X} u , v\rangle_{E}
- \frac12 \|u\|_{E}^{2} \langle  u , \nabla_{X}v\rangle_{E}
+ \frac12 \|v\|_{E}^{2} \langle \nabla_{X} v, u\rangle_{E}
- \frac12  \|v\|_{E}^{2}\langle \nabla_{X}u , v\rangle_{E}
= 0
\end{align*}
rearranging the equation above,
\begin{align*}
\frac12 \|u\|_{E}^{2} \langle \nabla_{X} u , v\rangle_{E}
- \frac12  \|v\|_{E}^{2}\langle \nabla_{X}u , v\rangle_{E} 
+ \frac12 \|v\|_{E}^{2} \langle \nabla_{X} v, u\rangle_{E} - \frac12 \|u\|_{E}^{2} \langle \nabla_{X}v , u\rangle_{E}
&= 0\\
\frac12 \langle \nabla_{X} u , v\rangle_{E} \left( \|u\|_{E}^{2} 
- \|v\|_{E}^{2} \right) 
- \frac12 \langle \nabla_{X} v , u\rangle_{E} \left( \|u\|_{E}^{2} 
- \|v\|_{E}^{2} \right) 
&= 0\\
\frac{1}{2}\left( \langle \nabla_{X} u , v\rangle_{E} - \langle \nabla_{X} v , u\rangle_{E}\right) \left( \|u\|_{E}^{2} 
- \|v\|_{E}^{2} \right) & = 0
\end{align*}
In other words, for any vector field $X$ on $M$ the relations $c(X)+b(X)=0$ holds if and only if at least one of the following conditions is satisfied
$$\|u\|_{E}^{2} = \|v\|_{E}^{2} \quad \text{and} \quad \langle \nabla_{X} v , u\rangle_{E} = \langle \nabla_{X} u , v\rangle_{E}
$$
Suppose that $ \langle \nabla_{X} v , u\rangle_{E} = \langle \nabla_{X} u , v\rangle_{E}$. Then, from compatibility of linear connection, we have $\frac{1}{2}X\langle u,v \rangle_{E} =\langle \nabla_{X} u , v\rangle_{E}$. Then, we conclude
\begin{eqnarray*}
b(X) &=&
\frac{1}{\det(M)}\left(
-\frac{1}{2}\langle u , v \rangle_{E} X\|u\|_{E}^{2}
+ \|u\|_{E}^{2}\,\langle \nabla_{X} u , v \rangle_{E}
\right).\\
&=&
\frac{1}{\det(M)}\left(
-\frac{1}{2}\langle u , v \rangle_{E} X\|u\|_{E}^{2}
+ \frac{1}{2}\|u\|_{E}^{2}X\langle u,v \rangle_{E}
\right)\\
&=& 0
\end{eqnarray*}
From relation obtained in system \eqref{eq:U}, we have
$$
-\sum_i\int_{M}(\left|a(X_{i})\right|^{2}-\left|b(X_{i})\right|^{2}) = \lambda,
$$
but since $b(X)=0$ for any vector field $X$ on $(M,g)$, it follows that
$$
-\sum_i\int_{M}\left|a(X_{i})\right|^{2} = \lambda,
$$
but $|a(X_{i})|^{2}$ is positive for any vector field $X_{i}$ in local orthonormal frame of $M$, but this implies that 
$$
0 > - \sum_i\int_{M}\left|a(X_{i})\right|^{2} = \lambda \quad \text{and} \quad \lambda>0
$$
and that is only possible if 
$$
\sum_i\int_{M}\left|a(X_{i})\right|^{2}=0,
$$
and $\nabla_{X_{i}}u=0$ for any vector field $X_{i}$ in local orthonormal frame of $M$. Consequently, $\mathscr{Lu}= 0$, that is, $u$ is harmonic, but this is absurd since $u$ is non zero. Therefore, the relation 
$$
\langle \nabla_{X} v , u\rangle_{E} = \langle \nabla_{X} u , v\rangle_{E} \qquad \text{for all} \ u, v \in \Gamma(E) \ \text{and} \ X \in \Gamma(TM)
$$
cannot happen.

Let us now examine the second case where
\begin{equation}\label{rigidinsection}
\| u\|^{2}=\|v\|_{E}^{2}, \quad \text{for all} \ u,v \in \Gamma(E).
\end{equation}

To conclude the proof in this situation, we first establish the some auxiliary lemma. We will present the statement of the results and how they combine to obtain the conclusion, then we will present the proof of the results at the end of this section.

\begin{lemma}\label{wedge-infinitesimal}
Let $u,v$ be a smooth sections of vector bundle $E$ over a closed Riemannian manifold $(M,g)$ endowed with linear connection $\nabla$
compatible with the inner product $\langle \cdot, \cdot \rangle_{E}$ on the fibers. Assume that $u,v$ are eigensections of the Connection 
Laplacian operator 
$$
\mathscr{L}: \Gamma_{L^{2}}(E) \to \Gamma_{L^{2}}(E), \qquad\mathscr{L}=\nabla^{*}\nabla 
$$ 
satisfied the following properties: 
\begin{enumerate}[label=(\roman*)]
\item $u \wedge \nabla_{X}u = v \wedge \nabla_{X}v$, for any vector field $X$ defined on $(M,g)$.
\item $\|u(x)\|^{2}=\|v(x)\|^{2}$ for all $x \in M$.
\end{enumerate}
Then $u,v$ are infinitesimal rigid eigensections. More precisely, 
\begin{equation}\label{infinitesimalimportant}
\|\nabla u(x) \|_{x}^{2} = \|\nabla v(x) \|_{x}^{2}
\end{equation}
\end{lemma}

Thus, assuming that conditions \eqref{wedge-identity} and \eqref{rigidinsection} are satisfied, the Lemma \ref{wedge-infinitesimal} shows that $\| \nabla u\|_{x}^{2} = \| \nabla v\|_{x}^{2}$. 

In order to conclude the analysis, we next show that, since the relation \eqref{rigidinsection} holds, the analysis can be reduced to the case $\ell = 2$.

\begin{lemma}\label{lemmareduction} Let $M$ be a compact Riemannian manifold without boundary. Consider $E$ be a vector bundle over $M$ of rank $\ell \geq 2$ endowed with linear connection $\nabla$ compatible with the inner product on the fibers. Let $u,v$ be a smooth sections of vector bundle $E$. If $u(x)$ e $v(x)$ are linearly independent for al $x\in M$ and for any vector field $X \in \Gamma(TM)$ holds 
\begin{equation}\label{wedgedifferentzero}
u\wedge \nabla_X u \;=\; v\wedge \nabla_X v \;\neq 0.
\end{equation}
Then, there exist a vector subbundle $\tilde{E}$ of rank $2$ such that the fibers of this bundle may be identified with $\mathrm{span}\{u(x),v(x)\}$ for all $x\in M$ and $u,v$ must be sections of a smooth vector bundle of rank equal to $2$.
\end{lemma}
The next lemma shows that the hypotheses \eqref{rigidinsection}, \eqref{wedge-identity} and \eqref{infinitesimalimportant} cannot hold simultaneously.
This incompatibility is a key step in the proof of the main lemma.

\begin{lemma}\label{simultaneously}
Let $M$ be a compact Riemannian manifold without boundary. Consider $E$ be a vector bundle over $M$ of rank $\ell \geq 2$ endowed with linear connection $\nabla$ compatible with the inner product on the fibers. Let $\mathscr{L}: \Gamma_{L^{2}}(E) \to \Gamma_{L^{2}}(E)$ be the Connection Laplacian operator acting on sections of $E$. Then $\mathscr{L}$ admits no nonzero eigensection which is simultaneously wedge rigid, rigid and infinitesimal rigid.
\end{lemma}

Assuming these three lemmas, the proof of the main lemma is complete. We begin the proof of the auxiliary results with the following proposition.
\begin{proposition}
Let $M$ be a compact Riemannian manifold without boundary. Consider $E$ be a vector bundle over $M$ of rank $\ell \geq 2$ endowed with linear connection $\nabla$ compatible with the inner product on the fibers. Let $u,v$ nonzero smooth sections satisfying the relation 
\begin{equation*}
u \wedge \nabla_{X} u = v \wedge \nabla_{X}v,
\end{equation*}
for any vector field $X \in \mathfrak{X}(M)$ defined on M. Then, $u(x),v(x)$ are smooth sections linearly independents.
\begin{proof}
Suppose, by contradiction, that $u(x), v(x)$ are linearly dependent. Then, there exists a scalar function $c \in C^{\infty}(M)$ such that 
$u(x)=c v(x)$. Thus, 
$$
\nabla_{X}u(x) = \nabla_{X}(cv(x)) = X(c)v(x) + c\nabla_{X}v(x)
$$
Taking the wedge product in both sides of the equation with $u$, we obtain 
\begin{eqnarray*}
\nabla_{X}u(x) \wedge u(x) &=& X(c)v(x)\wedge u(x) + c\,\nabla_{X}v(x) \wedge u(x)\\
&=& X(c)v(x)\wedge cv(x) + c\,\nabla_{X}v(x) \wedge cv(x)\\
&=& |c|^{2}(\nabla_{X}v(x) \wedge v(x))
\end{eqnarray*}
Since $\nabla_{X} v\wedge v= \nabla_{X} u\wedge u$, it follows that $\nabla_{X} v(x) \wedge v(x)= |c|^{2}(\nabla_{X}v(x) \wedge v(x))$ and $c= \pm1$. Consequently, $u(x)=\pm v(x)$, for all $x \in M$,and by the principle of unique continuation it must coincide everywhere, which is a contradiction since $u$ and $v$ are linearly independent. Therefore $u$ cannot be a scalar multiple of $v$, and $u$ and $v$ are linearly independent.
\end{proof}
\end{proposition}

With this proposition at hand, we can now prove the auxiliary lemmas stated above.

\begin{proof}[Proof of Lemma \ref{wedge-infinitesimal}]
From identity \eqref{wedge-identity}, for any smooth vector field $X$, we have
\begin{equation*}
u(x)\wedge\nabla_{X}u(x) = v(x)\wedge\nabla_{X} v(x), \qquad \text{for all} \ u, v \in \Gamma(E) \ \text{and} \ X \in \Gamma(TM)
\end{equation*}
Using the definition of the inner product of the wedge product, for a vector field $X$ on $M$, we obtain the relation
\begin{eqnarray*}
\|u(x)\wedge\nabla_{X}u(x)\|^{2}&=& \|v(x)\wedge\nabla_{X}v(x)\|^{2}\\
\|u(x)\|^{2}\|\nabla_{X}u(x)\|^{2} - \langle u(x),\nabla_{X}u(x)\rangle^{2} &=& \|v(x)\|^{2}\|\nabla_{X}v(x)\|^{2} - \langle v(x),\nabla_{X}v(x)\rangle^{2}
\end{eqnarray*}
Therefore,
\begin{eqnarray*}
\|u(x)\|^{2}\|\nabla_{X}u(x)\|^{2}
- \left\|\frac{1}{2}\nabla\| u(x)\|^{2}\right\|^{2}
&=&
\|v(x)\|^{2}\|\nabla_{X}v(x)\|^{2}
- \left\|\frac{1}{2}\nabla\| v(x)\|^{2}\right\|^{2}
\end{eqnarray*}
and using the relation $\|u\|_{E}^{2}=\|v\|_{E}^{2}$, we obtain the following relation
\begin{equation*}
\|u(x)\|^{2}\|\nabla_{X}u(x)\|^{2}
-
 \left\|\frac{1}{2}\nabla\| u(x)\|^{2}\right\|^{2}
=
\|u(x)\|^{2}\|\nabla_{X}v(x)\|^{2}
-
 \left\|\frac{1}{2}\nabla\| u(x)\|^{2}\right\|^{2}
\end{equation*}
Hence, 
\begin{equation*}
\|\nabla_{X}u(x)\|^{2}
=\|\nabla_{X}v(x)\|^{2}.
\end{equation*}
In other words, the rigid and wedge rigid hypothesis implies that the infinitesimal rigid eigenspace hypothesis.
\end{proof}

\begin{proof}[Proof of Lemma \ref{lemmareduction}]
Let $x\in M$ and $X\in \Gamma(TM)$. Since $u(x)\wedge \nabla_Xu(x)\neq 0$, it follows that $u(x)$ and $\nabla_Xu(x)$ are linearly independent and therefore define a $2$-dimensional  plane in the fiber $E_{x}$. More precisely
$$
\pi_{(x,y)} =\mathrm{span}\{u(x),\nabla_Xu(x)\}\subset E_{x}
$$
has dimension equal to $2$. Moreover, since
$u(x)\wedge \nabla_Xu(x)=v(x)\wedge \nabla_Xv(x)\neq 0$, it follows that
$$
\pi_{(x,y)}=\mathrm{span}\{u(x),\nabla_Xu(x)\}=\mathrm{span}\{v(x),\nabla_Xv(x)\}.
$$
Therefore, there exists a unique $2$-dimensional plane $\pi_{(x,y)}$ containing the vectors 
$$
u(x),\,v(x),\,\nabla_Xu(x),\,\nabla_Xv(x)
$$ 
On the one hand, $u(x)$ e $v(x)$ are linearly independent,  
$$
\tilde{E}_{x}=\mathrm{span}\{u(x),v(x)\}
$$
has dimension equal to $2$, where $\tilde{E}_{x}\subset E_{x}$ . On the other hand $u(x),v(x)\in \pi_{(x,y)}$ and $\dim{\pi_{(x,y)}}=2$. Therefore,
$$
\tilde{E}_x=\pi_{(x,y)}.
$$

Thus, we obtain a subbundle $\tilde{E}$ of $E$ whose fibers are given by the two-dimensional planes
$$
\tilde{E}_x=\pi_{(x,y)}=\mathrm{span}\{u(x),v(x)\} = \mathrm{span}\{u(x),\nabla_Xu(x)\}=\mathrm{span}\{v(x),\nabla_Xv(x)\}
$$
Now, given that $\tilde{E}$ is a subbundle of $E$, we can induced a linear connection $\tilde{\nabla}$ on $\tilde{E}$ by 
$$
\tilde{\nabla}_X u := \nabla_X u,
\qquad \forall u\in\Gamma(\tilde{E}),\, X \in \Gamma(TM),
$$
and as $u(x),v(x) \in \tilde{E}_{x}$, it follows that, $\nabla_{X}u(x), \nabla_{X}v(x)$ lies in the fiber $\tilde{E}_{x}$. Consequently, the subbundle $\tilde{E}$ is invariant by linear connection $\nabla$ and the induced linear connection $\tilde{\nabla}$ defined on $\tilde{E}$ coincides with $\nabla$. 

Since the Connection Laplacian operator is defined with respect to linear connection defined on the smooth vector bundle and the linear connections $\nabla, \tilde{\nabla}$ coincides, we have 
$$
\mathscr{L}^{\tilde{\nabla}} = \mathscr{L}^{\nabla} \quad \text{and} \quad
\mathrm{Spec}(\mathscr{L}^{\tilde{\nabla}}) = \mathrm{Spec}(\mathscr{L}^{\nabla})
$$
Thus, for every $\lambda\in\mathrm{Spec}(\mathscr{L}^{\tilde{\nabla}})$,
$$
\ker(\mathscr{L}^{\tilde{\nabla}}-\lambda I)=\ker(\mathscr{L}^{\nabla}-\lambda I),
$$
Hence, since the relation $\eqref{wedgedifferentzero}$ holds, the section $u,v$ must be sections of a rank-2 vector bundle. 
\end{proof}

\begin{proof}[Proof of Lemma \ref{simultaneously}]
Suppose that the Connection Laplacian operator has wedge rigid eigenspace, that is, for all $u,v \in \Gamma(E)$, it follows that
$$
u \wedge\nabla_{X}\,u = v \wedge\nabla_{X}\,v \quad \text{for any} \ X \in \Gamma(E)
$$
From Lemma \eqref{wedge-infinitesimal}, we conclude that $\mathscr{L}$ has rigid and infinitesimal rigid eigenspace.

Since $E$ is a vector bundle associated to $\mathrm{SO}(2)$-principal bundle $P$ over $M$, there exists a isometry between $\Gamma_{L^{2}}(E)$, the space of smooth sections of $E$ and $C^{\infty}(P, V)^{\mathrm{SO}(V)}$, the space of $\mathrm{SO}(V)$-equivariant functions. This allows us to translate the rigid and infinitesimal rigid to $\mathrm{SO}(V)$-equivariant eigenfunctions of Laplace-Beltrami operator $\Delta$ defined on total space $P$. In other words, given $\Phi, \Psi : P \to \mathbb{R}^{2}$ with $\|\Phi\|_{L^{2}}=\|\Psi\|_{L^{2}}=1$ satisfies the following conditions:
\vspace{0.2cm}
\begin{enumerate}[label=(\roman*)]
    \item $\|\Psi(x)\| = \|\Phi(x)\|$, for all $x \in P$,
    \item $\|d\Phi^{h}(X)
)\|_{\mathbb{R}^{m}}^{2} = \|d^{h}\Psi(X)\|_{\mathbb{R}^{m}}^{2}$,  for any vector field $X \in \Gamma(TM)$.
\end{enumerate}
Thus, we are now in a position to apply the Lemma \eqref{TeseMarrocos}, since all of its hypotheses are satisfied. Then, there exists an ortogonal transformation $T: \mathbb{R}^{2} \to \mathbb{R}^{2}$ such that $\Psi(x)=T\Phi(x)$, where $T$ is a constant matrix. But this implies that components of $\Phi$ and $\Psi$ are linearly dependents, which is absurd, given that $\Phi, \Psi$ are orthogonal $\mathrm{SO}(2)$-equivariant eigenfunctions of the Laplace-Beltrami operator $\Delta_{\tilde{g}}$ defined on $P$. Hence, $\mathscr{L}_{g}$ has no rigid or infinitesimal rigid eigenspace. 
\end{proof}
Thus, the second case \eqref{rigidinsection} is impossible, and we obtain a contradiction. Therefore, since the Connection Laplacian operator $\mathscr{L}$ depends analytically on $t$, the derivatives $\lambda_{1}'(0),\ldots,\lambda_{\ell}'(0)$ of the eigenvalue branches associated with a multiple eigenvalue $\lambda$ are precisely the eigenvalues of $\Pi\circ \mathscr{L}^{(1)}|_{E_{\lambda}}$, where $\Pi$ is a projection into eigenspace $E_{\lambda}$, the eigenspace associated to $\lambda$. Therefore, if $\Pi\circ \mathscr{L}^{(1)}|_{E_{\lambda}}$ is not a multiple of the identity, there exist $i\neq j$ such that $\dot{\lambda}_{i}(0)\neq \dot{\lambda}_{j}(0)$, which implies that $\lambda_{i}(t)\neq \lambda_{j}(t)$ for all $|t|<\varepsilon$, and this completes the proof of the Lemma \ref{lemmaconnection}.
\end{proof}
\end{lemma}

\section{Main Results}\label{sectionmainresults}

\begin{proof}[Proof of Theorem \ref{teoremaconexão}]

For a linear connection $\nabla \in \mathrm{Conn}(E)$ defined on a smooth vector bundle $E$ compatible with inner product $\langle \cdot, \cdot \rangle_{E}$ on the fibers, we define the following subsets of $\mathrm{Conn}(E)$ by
$$
\mathrm{Conn}(E)_{n} := \Big\{ \nabla \in \mathrm{Conn}(E) \;\Big|\; 
\text{the first $n$ eigenvalues of } \mathscr{L} \text{ on } \Gamma_{L^{2}}(E) \text{ are simple} \Big\}.
$$
and 
$$
\mathrm{Conn}(E)_\infty := \Big\{ \nabla \in \mathrm{Conn}(E) \ \Big| \ 
\text{all eigenvalues of } \mathscr{L} \text{ on } \Gamma_{L^{2}}(E) \text{ are simple} \Big\},
$$
which satisfies the relation
\begin{equation*}
\mathrm{Conn}(E)_\infty \subset \cdots \subset \mathrm{Conn}(E)_n \subset \mathrm{Conn}(E)_{n+1} \subset \cdots 
\subset \mathrm{Conn}(E)_1 \subset \mathrm{Conn}(E)_0 = \mathrm{Conn}(E),    
\end{equation*}
and
$$
\mathrm{Conn}(E)_{\infty} \;=\; \bigcap_{n=0}^{\infty} \mathrm{Conn}(E)_{n}.
$$

By the continuous dependence of simple eigenvalues under small perturbations of the connection, each set $\mathrm{Conn}(E)_{n}$ is open in $\mathrm{Conn}(E)$. 
Thus, to prove that $\mathrm{Conn}(E)_{\infty}$ is residual in $\mathrm{Conn}(E)$, 
it is sufficient to show that $\mathrm{Conn}(E)_{n+1}$ is dense in $\mathrm{Conn}(E)_{n}$ 
for all $n = 0,1,2,\dots$.

Fix $\nabla(0) \notin \mathrm{Conn}(E)_{n}$, and let $U$ be an open neighborhood of $\nabla(0)$. 
Since at least one of the eigenvalues of the Connection Laplacian $\mathscr{L}_{\nabla(0)}$ has multiplicity greater than one, we proceed to split it. By Lemma \ref{lemmaconnection}, there exists a perturbation of the form
$$
\gamma(t) = \nabla_{0} + t\mathscr{A}, \qquad \mathscr{A} \in \Omega^{1}(M,\mathrm{End}(E)),
$$
for which at least two of the eigenvalues associated to $\mathscr{L}(t)$ that originally coincided become distinct, as long as $t$ is sufficiently small. Moreover, those eigenvalues that were already simple remain simple under such perturbations. Also, for $t$ small enough, we can assume that none of the eigenvalues outside the original cluster will enter it after perturbation.

Let $t_{1}$ be chosen so that $\nabla(t_{1}) \in U$. If $\nabla(t_{1}) \in \mathrm{Conn}(E)_{n}$, we are done. If not, then in finitely many steps the repetition of this construction produces a perturbed connection
$$
\nabla_{N} = \nabla_{0} + t_{1}\mathscr{A}_{1} + \cdots + t_{N}\mathscr{A}_{N}
$$
belonging to $U \cap \Gamma_{n}$. Hence, $\Gamma_{n}$ is dense.  
\end{proof}

As consequence, we can obtain the following result: 

\begin{corollary}\label{corollaryfinal}
Let $E$ be a vector bundle over compact $M$ without boundary with rank $m\geq 2$ endowed with inner product on the fibers $\langle \cdot, \cdot \rangle_{E}$. Then, the set of the Riemannian metric such that the eigenvalues of operator $\mathscr{L}_{g_{0}, \nabla_{0}}$ is simples is a residual set in $\mathrm{Met}^{k}(g)$.

\begin{proof}
From Theorem \ref{teoremaconexão}, there exists a residual set of connection and a fixed Riemannian metric on base manifold $M$ such that all non zero eigenvalues of Connection Laplacian are simple.

Let $g_{0}$ be a Riemannian metric on $M$ such that the operator $\mathscr{L}_{g_{0}, \nabla_{0}}$ has simple spectrum. By the continuous dependence of the Connection Laplacian operator on the parameter $g_{0}$, there exist $\varepsilon > 0$ and an open neighborhood $\mathcal{W}$ of $g_{0}$ such that
$$
\mathrm{Spec}(\mathscr{L}_g) \cap (\lambda_0 - \varepsilon, \lambda_0 + \varepsilon) \neq \varnothing
\quad \text{for all } g \in \mathcal{W}.
$$
Let $g(t)$ be an analytic one-parameter family of Riemannian metrics connecting $g$ to $g_{0}$, given by
$$
g(t) = (1-t)g + t g_{0}, \qquad t \in [0,1].
$$
and consider the associated Connection Laplacian operator $\mathscr{L}_{g(t), \nabla_{0}}$.

Suppose that the eigenvalue $\lambda_{0}$ has multiplicity $\ell \ge 2$. By Kato’s Lemma, there exists $\delta > 0$ such that, for $|t| < \delta$, there exists at most $\ell$ analytic eigenvalue curves
$$
\lambda_{1}(t), \dots, \lambda_{\ell}(t),
$$
and $\ell$ corsponding eigensections curves. As $\lambda_{i}(t), \lambda_{j}(t)$ are analytic curves, then for any pair $i \neq j$, the difference $\lambda_{i}(t) - \lambda_{j}(t)$ is also an analytic function in $t$. 

If $\lambda_{i}(t)$ and $\lambda_{j}(t)$ are distinct analytic eigenvalue branches, then 
$\lambda_{i}(t) - \lambda_{j}(t)$ is not identically zero and can vanish only at isolated values of $t$.
Therefore, eigenvalue multiplicities can arise only at finitely many points in any compact parameter interval.

We now use the assumption that $g(1) = g_{0}$ has simple spectrum in the chosen spectral window. In particular, no eigenvalue multiplicity occurs at $t=1$. Consequently, no two eigenvalue curves can remain coincident up to $t=1$. This implies that there are only finitely many crossing times in $[t_{0},1]$ for any fixed $t_{0} > 0$. We may therefore choose a parameter $t^{*} \in (0,1)$, arbitrarily close to $1$, such that no eigenvalue crossing occurs in the interval
$$
|\lambda(t^{*}) - \lambda_{0}| < \varepsilon.
$$
For this choice of $t^{*}$, all eigenvalues of $\mathscr{L}_{g(t^{*})}$ in the interval are simple. In particular, this shows that there exist Riemannian metrics arbitrarily close to $g_{0}$ for which the spectrum is simple in the interval. The openness follows from the continuous dependence of eigenvalues on the Riemannian metric.
\end{proof}
\end{corollary}

\section{\texorpdfstring{$G$-simple spectrum}{G-simple spectrum}}

Let $(M,g)$ be a compact $n$-dimensional Riemannian manifold without boundary. 
Let $P \to M$ be a principal $G$-bundle, where $G$ is a compact Lie group, and let
$$
\sigma : G \to \mathrm{O}(V)
$$
be an irreducible orthogonal representation on a finite-dimensional real vector space $V$.

The representation $\sigma$ determines the associated vector bundle $E := P \times_{\sigma} V$
defined as the quotient of $P \times V$ by the equivalence relation
$$
(p, v) \sim (p \cdot g, \sigma(g^{-1})v),
\qquad \text{for all } p \in P,\; g \in G,\; v \in V.
$$
Since $\sigma$ is orthogonal representation, it induces a $G$-invariant inner product $\langle \cdot , \cdot \rangle_{E}$
on the fibers of the vector bundle $E$. Thus, there is a isometric correspondence between spaces $\Gamma_{L^{2}}(E)$ and 
$$
L^{2}(P,V_{\sigma})^{\sigma}
:=
\left\{
\Phi:P\to V_{\sigma}\; ;\;
\Phi(p\cdot g)=\sigma(g^{-1})\Phi(p)
\right\}.
$$

Let $\nabla^{\sigma}$ be a linear connection on $E$ compatible with the inner product $\langle \cdot, \cdot \rangle_{E}$ on the fibers. We set
$$
\mathscr{L}: \Gamma_{L^{2}}(E) \to \Gamma_{L^{2}}(E), \qquad \mathscr{L}:=(\nabla^{\sigma})^{*}(\nabla^{\sigma})
$$
the Connection Laplacian operator associated to linear connection $\nabla^{\sigma}$ given above. The linear connection $\nabla^{\sigma}$ defined on $E$ induces an Ehresmann connection $H^{\sigma}$ on total space $P$. Assume that the Lie group $G$ is endowed with bi-invariant metric. Thus, we can construct a Riemannian metric $\tilde{g}$ on total space $P$ such that the Connection Laplacian operator $\mathscr{L}$ is unitary equivalent to Laplace-Beltrami operator $\Delta_{\tilde{g}}$ acting on $L^{2}(P,V_{\sigma})^{\sigma}$.

From right action of compact Lie group $G$ on Riemannian manifold $(P, \tilde{g})$, we can induce a regular representation of Lie group $G$ on Hilbert space $L^{2}(P)$ given by
$$
(U_{g}\Phi)(p)=\Phi(p\cdot g), \qquad g\in G.
$$
Since $U_{g}$ is a action induced by an isometry, it follows that the Laplace-Beltrami operator commutes with the regular action. More precisely,
$$
\Delta_{\tilde{g}}(U_{g}\Phi)=U_{g}(\Delta_{\tilde{g}}\Phi), \qquad \forall g\in G.
$$
Since $G$ is a compact group acting by isometries on $P$, the induced action on 
$L^{2}(P)$ is unitary. Hence, by the Peter--Weyl Theorem, we obtain the 
orthogonal decomposition
$$
L^{2}(P)\cong \bigoplus_{\rho\in\widehat G}\, I(V_{\rho} ),
$$
where $I(V_{\rho} )=m_{\rho}V_{\rho}$, and $V_{\rho}$ ranges over the 
irreducible unitary representations of $G$ and $m_{\rho}\in\mathbb{N}\cup\{0\}\cup{\infty}$ 
denotes the multiplicity of $V_{\rho}$ in $L^{2}(P)$.

Moreover, since the Laplace-Beltrami operator $\Delta_{\tilde g}$ commutes with 
the action of $G$, each eigenspace is $G$-invariant. Thus, for every non-zero 
eigenvalue $\lambda$ of $\Delta_{\tilde g}$, the associated eigenspace
$$
E_{\lambda}:=\ker(\Delta_{\tilde g}-\lambda)\subset L^{2}(P)
$$
is a $G$-invariant subspace of $L^{2}(P)$. Consequently, it inherits a decomposition into 
irreducible representations:
$$
E_{\lambda} \cong \bigoplus_{\rho\in\widehat G} m_{\lambda,\rho}\,V_{\rho},
$$
where $m_{\lambda,\rho}$ denotes the multiplicity of the irreducible 
representation $V_{\rho}$ inside $E_{\lambda}$.
The induced Ehresmann connection on $P$ and the $G$-invariant Riemannian metric $\tilde{g}$ defined on $P$ imply that $\mathscr{L}$ corresponds to the restriction of $\Delta_{\tilde{g}}$ to the $G$-equivariant subspace with respect to $\sigma$, up to a constant $c\ge0$ (the Casimir eigenvalue associated to bi-invariant metric on compact Lie group $G$)
\begin{equation}\label{equationDeltaMathscrL}
\Delta_{\tilde{g}}|_{L^{2}(P,V_{\sigma})^{\sigma}} \cong \mathscr L + c\,\mathrm{Id}.    
\end{equation}
Hence, if $\mathscr L s=\mu s$ and isometric correspondence between $\Gamma_{L^{2}}(E)$ and 
$L^{2}(P,V_{\sigma})^{\sigma}$, then
$$
\Delta_{\tilde{g}}\Phi=(\mu+c)\Phi.
$$
Define
$$
\mathcal E_{\lambda,\sigma}
:=
\left\{
\Phi\in L^{2}(P,V_{\sigma})^{\sigma}
: \,\Delta_{\tilde{g}}\Phi=\lambda\Phi
\right\}.
$$
Then for $\lambda=\mu+c$,
\begin{equation}\label{dimEL}
\dim \mathcal E_{\lambda,\sigma}
=
\dim\ker(\mathscr L-\mu).
\end{equation}
and we have $m_{\lambda,\sigma} = \dim\mathcal{E}_{\lambda,\sigma}$.

\begin{lemma}\label{g-simples}
If $\mu$ is a simple eigenvalue of the Connection Laplacian operator $\mathscr{L}$ on $\Gamma(E)$, then $m_{\lambda,\sigma}=1.$ 
\begin{proof}
Since $\lambda$ is simple, we have $\dim\ker(\mathscr{L}-\mu)=1$. Hence,
$\dim\mathcal E_{\lambda,\sigma}=1$.
If $m_{\lambda,\sigma}\ge2$, then by previous lemma, it follows that
$\dim\mathcal E_{\lambda,\sigma}\ge2$, but this is a contradiction. Therefore $m_{\lambda,\sigma}=1$.
\end{proof}
\end{lemma}
An eigenvalue $\lambda$ of $\Delta_{\tilde{g}}$  is said to be $G$-simple if its associated eigenspace is irreducible as a real representation of $G$. The Riemannian metric for which the eigenvalues of the Laplace-Beltrami operator are $G$-simple is called $G$-simple Riemannian metric. Our objective is to analyze the generic occurrence of this property in the class of $G$-invariant submersion Riemannian metrics on principal bundles with totally geodesic fibers.
\begin{theorem}
Let $P$ be a principal $G$-bundle over a compact Riemannian manifold $(M,g)$, with compact Lie group $G$. Then, the set of all $C^{k}$-Riemannian metric such that all non-zero eigenvalues of Laplace-Beltrami operator $\Delta_{\tilde{g}}$ are $G$-simple is a residual.
\begin{proof}
Let $P$ be a principal $G$-bundle over a compact Riemannian manifold $M$, where $G$ is a compact Lie group. 
Let 
$$
\sigma : G \to \mathrm{O}(V)
$$
be an irreducible orthogonal representation of $G$ on a finite-dimensional vector space $V$. 

We denote by 
$$
E_{\sigma} = P \times_{\sigma} V
$$
the associated vector bundle determined by $\sigma$, constructed as in the previous section.

From Theorem \ref{teoremaconexão}, there exists a pair $(g_{0}, \nabla_{0})$ such that spectrum of Connection Laplacian operator is simple. Consequently, by Corollary \ref{corollaryfinal}, there exists a residual set of Riemannian metrics such that all non-zero eigenvalues for Connection Laplacian operator is simple. 
Since the relations \eqref{equationDeltaMathscrL} and \eqref{dimEL} holds, we conclude by Lemma \ref{g-simples} that there exists a $C^{k}$-Riemannian metric such that the eigenvalues of $\Delta_{\tilde{g}}|_{I(V_\rho)}$ are $G$-simple. By an argument analogous to that of Corollary 
\ref{corollaryfinal}, we ensure the existence of a residual set of $C^{k}$-Riemannian metrics such that all non-zero eigenvalues of the Laplace-Beltrami operator $\Delta_{\tilde{g}}|_{I(V_\rho)}$ are $G$-simple with respect to representation $\sigma$ fixed. 

This proves that the eigenvalue $\lambda$ is simple on each isotypic component, but not necessarily simple in the full eigenspace $E_{\lambda}$, since two non-equivalents distinct representation types may occur. We will show that the spectrum can be decomposed even in that situation. 

Consider the orthogonal irreducible represention $\sigma = \sigma_{1}\oplus\sigma_{2}$, where $\sigma_{1}$ and $\sigma_{1}$ are two irreducible representations not equivalents of group $G$ on the vector space $V_{\sigma} = V_{\sigma_{1}} \oplus V_{\sigma_{2}},$

Denote by
$$
E_\sigma := P \times_{\sigma} (V_1 \oplus V_2)
\;\cong\;
(P \times_{\sigma_1} V_1)\;\oplus\;(P \times_{\sigma_2} V_2)
\;=\;
E_{\sigma_1} \oplus E_{\sigma_2},
$$
the associated vector bundle over the compact Riemannian manifold $(M,g)$, where $\oplus$ on the right denotes the Whitney sum. We can endowed the associated vector bundle $E_{\sigma}$ with linear connection $\nabla^{\sigma}=\nabla^{\sigma_{1}}\oplus \nabla^{\sigma_{2}}$ compatible with inner product on the fibers, where 
$$
\nabla^{\sigma_{1}}: \Gamma_{L^{2}}(E_{\sigma_{1}}) \to \Omega^{1}(M, E_{\sigma_{1}}) \quad \text{and}\quad \nabla^{\sigma_{2}}: \Gamma_{L^{2}}(E_{\sigma_{2}}) \to \Omega^{1}(M, E_{\sigma_{2}})
$$

Fix now $\lambda \neq 0$, and let $
E_\lambda := \ker(\Delta_{\tilde g} - \lambda) \subset L^2(P)$ be the eigenspace associated to eigenvalue $\lambda$. Suppose that $E_{\lambda}$ contains $G$-invariant subspaces $W^{(i)}$ such that $W^{(i)}$ is $G$-isomorphic to $V_{\sigma_i}$ for $i=1,2$.
Equivalently, for each $i$ there exists a nonzero
$\sigma_i$-equivariant function $\Phi^{(i)} \in \mathcal{E}_{\lambda,\sigma_i}$,
$$
\Delta_{\tilde{g}}\Phi^{(i)} = \lambda \Phi^{(i)},
\qquad
\Phi^{(i)}(p\cdot g)=\sigma_i(g^{-1})\Phi^{(i)}(p).
$$

Let $\sigma=\sigma_1\oplus\sigma_2$ and
$V_\sigma = V_{\sigma_1} \oplus V_{\sigma_2}$.
Any $G$-equivariant functions $\Phi : P \to V_\sigma$ with respect to $\sigma$, decomposes uniquely as
$\Phi=(\Phi_1,\Phi_2)$ with $ \Phi_i : P \to V_{\sigma_i}$, where $V_{\sigma_2}$ is the component where $\Phi_{1}$ vanishes and
the $V_{\sigma_1}$ is the component where $\Phi_{2}$ vanishes.

Since $\sigma$ acts block-diagonally, the equivariance condition
$$
\Phi(p\cdot g)=\sigma(g^{-1})\Phi(p)
$$
is equivalent to
$$
\Phi_i(p\cdot g)=\sigma_i(g^{-1})\Phi_i(p), \qquad i=1,2.
$$
Consequently,
$$
\mathcal E_{\lambda,\sigma}
=
\mathcal E_{\lambda,\sigma_1}\oplus
\mathcal E_{\lambda,\sigma_2}.
$$

The $G$-equivariant function $\Phi$ with respect to $\sigma$, satisfies the relation $\Delta_{\tilde{g}}\Phi=\lambda\Phi$. Consequently, 
$$
\Delta_{\tilde{g}}\Phi = (\Delta_{\tilde{g}}\Phi_{1}, \Delta_{\tilde{g}}\Phi_{2})
$$
Hence, we can decompose the eigenspace associated to $\lambda$ with respect to irreducible representation $\sigma$ as
$$
\mathcal{E}_{\lambda,\sigma}\simeq \mathcal{E}_{\lambda,\sigma_1}\oplus
\mathcal{E}_{\lambda,\sigma_2},
$$
satisfying the relation
$$
\dim\mathcal{E}_{\lambda,\sigma}
=
\dim\mathcal{E}_{\lambda,\sigma_1}
+
\dim\mathcal{E}_{\lambda,\sigma_2}.
$$
Since $\Phi^{(1)} \in \mathcal{E}_{\lambda,\sigma_1}\setminus\{0\}$ and
$\Phi^{(2)} \in \mathcal E_{\lambda,\sigma_2}\setminus\{0\}$, it follows that
$$
\dim \mathcal{E}_{\lambda,\sigma_1} \ge 1 \quad \text{and} \quad \dim \mathcal{E}_{\lambda,\sigma_2} \ge 1.
$$ 
Hence, $\dim \mathcal{E}_{\lambda,\sigma} \ge 2$. 
Since $\ker(\mathscr{L}^\sigma-\lambda)\simeq
\mathcal{E}_{\lambda,\sigma}$ holds, then $\dim \ker(\mathcal{L}^\sigma-\lambda)\geq 2$.

Since $\mathcal{E}_{\lambda,\sigma}\simeq \mathcal{E}_{\lambda,\sigma_1}\oplus
\mathcal{E}_{\lambda,\sigma_2}$ holds, we decompose $\Phi$ according to the direct sum $V = V_1 \oplus V_2$. 
For each $p \in P$, write
$$
\Phi(p) = (\Phi_1(p), \Phi_2(p)),
$$
where $\Phi_i$ is obtained by composing $\Phi$ with the canonical projection onto $V_i$.

By Theorem \ref{teoremaconexão}, there exist a pair $(g_{0}, \nabla_{0})$ such that all nonzero eigenvalues of the Connection Laplacian is simple. Hence 
\begin{equation}\label{lastLE}
1=\dim\ker(\mathcal{L}^\sigma-\lambda)\simeq
\dim\mathcal{E}_{\lambda,\sigma}    
\end{equation}
and this implies that $\Phi = (\Phi_{1}, 0)$ or $\Phi = (0, \Phi_{2})$. But, it implies that $(\Phi_{1}, 0)$ and $(0, \Phi_{2})$ are linearly independents, then the dimension of $\dim\mathcal{E}_{\lambda,\sigma}$ is at least $2$, which implies a contradiction of equation \eqref{lastLE}.

We distinguish the case in which the fiber has dimension one. When $E$ is a smooth vector bundle with rank one, the Laplace-Beltrami operator $\Delta_{\tilde{g}}$ must be restricted to the space of $G$-invariant eigenfunctions defined on $P$ with values in $V$. 
The conclusion of this result follows directly from a result of Karen Uhlenbeck \cite{Uhlenbeck}.

The above arguments are classic arguments of splitting eigenvalues developed by Wilson and Bleecker in \cite{BleeckerWilson1980}, and the proof of density for the $C^{k}$-Riemannian metrics follows analogously to the case of linear connection present in Section \ref{sectionmainresults}.

\end{proof}
\end{theorem}
\section*{Appendix}\label{riemanniansubmersion}

\subsection*{Relation between the Connection Laplacian and Laplace-Beltrami operators}

In this section, we establish the connection between the Connection Laplacian operator acting on a vector bundle and the Laplace-Beltrami operator on a Riemannian manifold $M$. This was explored by Gérard Besson in \cite{Besson}, and by Bergery and Bourguignon in \cite{Bourguignon}.  We will assume that $M$ is a closed orientable Riemannian manifold. 

Let $\pi: E \to M$ be a smooth vector bundle of rank $m$ over $M$ of dimension $n$, endowed with a linear connection $\nabla$ that is compatible  with a metric $\langle \cdot, \cdot \rangle_{E}$ on the fibers $V$. Consider $\mathrm{SO}(V)$ the structure Lie group of this bundle. We construct a principal $\mathrm{SO}(V)$-bundle $P = \bigsqcup_{x \in M} \mathrm{SO}(\pi^{-1}(x))$ associated to $E$, where $\pi^{-1}(x)$ is the fiber over a point $x \in M$, and the projection map $\widetilde{\pi}: P \to M$ maps each element of $\mathrm{SO}(\pi^{-1}(x))$ to its base point $x$.

 More precisely, by local trivialization $\{(U_{\alpha}, \psi_{\alpha})\}$ of vector bundle $E$, we have that
$$
\psi_{\alpha} : E|_{U_{\alpha}} \to U_{\alpha} \times V
$$
is a bundle isomorphism. For each $x \in U_\alpha$, we define $\psi_{\alpha,x} := \psi_\alpha(x, \cdot)$, providing a frame of $\pi^{-1}(x)$. This determines the following isomorphism:
$$
\varphi_{\alpha,x} : \mathrm{SO}(V) \to \mathrm{SO}(\pi^{-1}(x)), \quad \varphi \mapsto \psi_{\alpha,x} \circ \varphi.
$$
Consequently, the trivialization of $P$ is given by 
$$
\varphi_{\alpha} : U_{\alpha} \times \mathrm{SO}(V) \to \widetilde{\pi}^{-1}(U_{\alpha}), \quad \varphi_{\alpha}(x, \varphi) = \psi_{\alpha,x} \circ \varphi.
$$
Thus, $E$ is identified as the quotient space $P\times_{\mathrm{SO}(V)}V$ and the following diagram holds
\begin{center}
\begin{tikzpicture}[node distance=3cm, auto, >=Latex,
    every node/.style={font=\normalsize},
    every path/.style={line width=0.4pt}
  ]
  \node (GammaE1) {$P \times V$};
  \node (GammaE2) [below of=GammaE1] {$P$};
  \node (Cpv) [right of=GammaE1] {$P \times_{\mathrm{SO}(V)} V$};
  \node (Cpvv) [right of=GammaE2] {$(M,g)$};
  
  \draw[->] (GammaE1) to node [swap] {$\mathrm{pr_{1}}$} (GammaE2);
  \draw[->] (GammaE1) to node {$q$} (Cpv);
  \draw[->] (Cpv) to node {$\pi$} (Cpvv);
  \draw[<-] (Cpvv) to node {$\widetilde{\pi}$} (GammaE2);
  \draw[->] (GammaE2) -- (Cpv)
    node[midway, sloped, above] {$q \circ \Phi$}
    node[midway, sloped, below] {$u \circ \tilde{\pi}$};
\end{tikzpicture}

\vspace{0.5em}
\text{Diagram \ref{diagra}}\label{diagra}
\end{center}
where $q$ is the quotient map.

From the diagram above, we can write the smooth vector bundle $E = P \times_{\mathrm{SO}(V)} V \to M$  as an associated bundle to principal $\mathrm{SO}(V)$-bundle $P$ over $M$. Moreover, there exists a one to one correspondence 
\begin{equation}\label{isometriaxi}
\xi: \Gamma(E) \xrightarrow{1:1} C^\infty(P, V)^{\mathrm{SO}(V)},
\end{equation}
where $C^\infty(P, V)^{\mathrm{SO}(V)}$ denotes the space of smooth $\mathrm{SO}(V)$-equivariant functions $\Phi: P \to V$ in the sense that $\Phi(y\cdot h) = h^{-1} \Phi(y)$, for all $y \in P, \ h \in \mathrm{SO}(V)$.

The map $\xi$ assigns to each section $u \in \Gamma(E)$ the unique equivariant function $\Phi \in C^\infty(P, V)^{\mathrm{SO}(V)}$ such that
$$
\Phi(y)=\xi(u)(y) = p^{-1}(u(\tilde{\pi}(y))),
$$
where $p$ is an isometric map $p: V\rightarrow E_{\tilde{\pi}(y)}$ given by $p(v)=q(y,v)$  and $p^{-1}$ is its inverse (see Example 5.2 in \cite{kobayashi} for further details).

Thus, we are able to translate the concepts of the linear connection $\nabla$ defined on $E$ to associated principal $\mathrm{SO}(V)$-bundle, using the
$C^{\infty}(M)$-module isomorphisms $\xi$. More specifically, for all $u \in \Gamma(E)$ and $X \in \mathfrak{X}(M)$, we have
\begin{eqnarray}\label{isometry}
\nabla_{X}(u) = \xi^{-1}(X^{h}(\xi(u)))
\end{eqnarray}
where $X^{h}$ is horizontal lift of the vector field $X$ from $M$ to $P$ (see Proposition 1.3 in \cite{kobayashi}).
The relation obtained in (\ref{isometry}) gives us a characterization of the Connection Laplacian operator in terms of a horizontal differential operator acting on $\mathrm{SO}(V)$-equivariant functions defined on $P$. More precisely, for $ \Phi \in C^{\infty}(P,V)^{\mathrm{SO}(V)}$, we define: 
\begin{equation}\label{horizontalnabla}
    \Delta_{h} \Phi = - \sum_i (X^{h}_{i} \circ X^{h}_{i} - D_{X^{h}_{i}} X^{h}_{i} ) \Phi.
\end{equation}

Since the relation (\ref{isometry}) holds, we obtain a relation between operators $\Delta_{h}$ defined above in (\ref{horizontalnabla}) and Connection Laplacian $\mathscr{L}_{g}$ given by 
\begin{eqnarray}\label{firsthorizontal}
\Delta_{h} \Phi &=& - \sum_{i} (X^{h}_{i}(\xi(\nabla_{X_{i}}(u))) - D_{X^{h}_{i}}(\xi(\nabla_{X_{i}}(u)))\\
&=& - \sum_{i} \xi(\nabla_{X_{i}}\nabla_{X_{i}}(u)) -\xi( \nabla_{\nabla_{X_{i}}}(u))\nonumber\\
&=& \xi(\mathscr{L}_{g}u).\nonumber
\end{eqnarray}

The operator $\Delta_h$ is called the \textit{Horizontal Laplacian}.  

In what follows, we shall always assume that our Lie group $\mathrm{SO}(V)$ is endowed with a bi-invariant Riemannian metric, and $P\rightarrow M$ equipped with an unique Riemannian metric $\tilde{g}$ such that it is a Riemannian submersion with totally geodesic fiber (see Theorem 3.5 in \cite{Vilms}). In \cite{Besson}, the authors showed that the Laplace-Beltrami operator $\Delta_{\tilde{g}}$ on $P$ can be decomposed into
$$\Delta_{\tilde{g}} = \Delta_v + \Delta_h$$
such that $[\Delta_v, \Delta_h] = 0$, and $\Delta_{v}$ is the \textit{Vertical Laplacian}, defined as
$$
(\Delta_{v} \Phi) = \left( \Delta_{\widetilde{\pi}^{-1}(x)} \left( \Phi \big\downarrow \widetilde{\pi}^{-1}(x) \right) \right)
$$
where the fiber $\widetilde{\pi}^{-1}(x)$ of $P$ through $x \in M$ is isomorphic to Lie group $\mathrm{SO}(V)$ and $\Delta_{h}$ is the \textit{Horizontal Laplacian} defined in (\ref{horizontalnabla}).

The following  statement is an immediate consequence of the previous construction.
\begin{lemma}
The Connection Laplacian agrees with the horizontal Laplacian $\Delta_{h}$. Moreover
\begin{equation}\label{connectionxhorizontal}
\xi^{-1}\mathscr{L}_{g} \xi \Phi= \Delta_{h} \Phi = (\Delta_{\tilde{g}} - \Delta_{v}) \Phi = \Delta_{\tilde{g}} \Phi - (\Delta_{\mathrm{SO}(V)})\Phi.
\end{equation}
\end{lemma}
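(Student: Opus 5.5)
The plan is to prove the chain of equalities in \eqref{connectionxhorizontal} in two stages: first identify $\xi^{-1}\mathscr{L}_g\xi$ with $\Delta_h$ through the horizontal-lift formula \eqref{isometry}, then use Besson's splitting $\Delta_{\tilde g}=\Delta_v+\Delta_h$ to rewrite $\Delta_h$ as $\Delta_{\tilde g}-\Delta_v$. (Reading the statement with $\Phi=\xi(u)$, the first expression means $\xi\,\mathscr{L}_g\,\xi^{-1}\Phi$.)

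For the first stage, fix $u\in\Gamma(E)$ and a local orthonormal frame $\{X_i\}$ of $M$. Applying \eqref{isometry} twice gives
\[
\xi(\nabla_{X_i}\nabla_{X_i}u)=X_i^h X_i^h\,\xi(u).
\]
For the correction term, apply \eqref{isometry} to the vector field $D_{X_i}X_i$. Then compare $(D_{X_i}X_i)^h$ with $\tilde D_{X_i^h}X_i^h$, where $\tilde D$ is the Levi-Civita connection of $\tilde g$. O'Neill's formula for a Riemannian submersion gives
\[
\tilde D_{X_i^h}X_i^h=(D_{X_i}X_i)^h+\tfrac12[X_i^h,X_i^h]^{v}=(D_{X_i}X_i)^h,
\]
since the bracket of a field with itself vanishes. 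Summing over $i$ then yields $\Delta_h\xi(u)=\xi(\mathscr{L}_g u)$, which is the computation \eqref{firsthorizontal} made precise. Here $\Delta_h$ is understood with $D_{X_i^h}X_i^h$ denoting $\tilde D_{X_i^h}X_i^h$, acting as a derivation.

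For the second stage, choose around $y\in P$ an orthonormal frame made of the horizontal lifts $X_i^h$ together with vertical fields $V_a$. The $V_a$ are fundamental fields coming from an orthonormal basis of $\mathfrak{so}(V)$ for the bi-invariant metric. Split $\Delta_{\tilde g}=-\sum(e_k e_k-\tilde D_{e_k}e_k)$ into its horizontal and vertical sums. Two facts are needed:
\begin{enumerate}[label=(\roman*)]
\item the horizontal part of $\tilde D_{V_a}V_a$ vanishes, because the fibers are totally geodesic;
\item the vertical part of $\tilde D_{X_i^h}X_i^h$ vanishes, by the O'Neill identity above.
\end{enumerate}
Consequently the vertical sum is exactly the fiberwise Laplacian of $\mathrm{SO}(V)$ with its bi-invariant metric, that is $\Delta_v$, and the horizontal sum is $\Delta_h$. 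This reproduces Besson's decomposition, which we may also simply quote from \cite{Besson}. It gives $\Delta_h\Phi=(\Delta_{\tilde g}-\Delta_v)\Phi$ with $\Delta_v\Phi=\Delta_{\mathrm{SO}(V)}(\Phi|_{\text{fiber}})$, which is the last equality.

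The main obstacle is the first stage. One must check that the vertical component of $\tilde D_{X^h}X^h$ really drops out, and that the frame-dependent expressions patch together globally. The cleanest route is to invoke O'Neill's $A$-tensor, which is skew, so that $A_{X^h}X^h=0$. The second stage is bookkeeping once total geodesicity of the fibers is used.
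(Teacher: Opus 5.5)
Your proposal is correct and follows essentially the same route as the paper. The paper treats the lemma as an immediate consequence of the horizontal-lift identity \eqref{isometry} (giving \eqref{firsthorizontal}) together with Besson's splitting $\Delta_{\tilde g}=\Delta_v+\Delta_h$; your O'Neill identity $\tilde D_{X^h}X^h=(D_XX)^h$ and your use of totally geodesic fibers make explicit two steps the paper leaves implicit, namely matching the correction term of $\Delta_h$ with $\nabla_{D_{X_i}X_i}$ and justifying the splitting.
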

\begin{remark}\label{remarkspectrum}
Since $\mathrm{SO}(V)$ is a Lie group equipped with a bi-invariant metric, chosen so that $\mathrm{SO}(V)$ is the space of direct orthonormal frames on the sphere $\mathbb{S}^{m-1}$. Then, we have
$$
(\Delta_{\mathrm{SO}(V)} \Phi)(p) = (m-1)\Phi(p).
$$
Consequently, the relation (\ref{connectionxhorizontal}) can be written as
\begin{equation*}
\mathscr{L}_{g}u \sim \Delta_h \Phi = \left(\Delta_{\widetilde{g}} - \Delta_{\mathrm{SO}(V)}\right)\Phi = \Delta_p \Phi - (m-1)\Phi.
\end{equation*}
In particular, given $u =\xi^{-1}(\Phi)$ an eigensection of the Connection Laplacian, the following relation holds
\begin{equation}\label{conjugation}
\lambda (\xi^{-1} \Phi) = \mathscr{L}_{g}(\xi^{-1} \Phi) = \xi^{-1}(\Delta_{h} \Phi) = \xi^{-1} (\Delta_p \Phi - (m-1)\Phi) =\left[\lambda_{\tilde{g}}-(m-1)\right](\xi^{-1} \Phi).
\end{equation}
Therefore, for all eigenvalue $\lambda_{\tilde{g}}$ associated to a $\mathrm{SO}(V)$-equivariant eigenfunction $\Phi$ of $\Delta_{\tilde{g}}$, we can conclude that $\lambda = \lambda_{\tilde{g}} - (m-1)$. 
\end{remark}
In this position, we can translate the analysis of the spectrum of the Connection Laplacian operator $\mathscr{L}_{g}$ into the spectrum of the Laplace-Beltrami operator $\Delta_{\tilde{g}}$ acting on $C^{\infty}(P,V)^{\mathrm{SO}(V)}$. Next, we detail the action of $\Delta_{\tilde{g}}$ on space $C^{\infty}(P,V)^{\mathrm{SO}(V)}$.

Consider $\Phi=(\varphi_{1}, \dots, \varphi_{m}) \in C^{\infty}(P,V)^{\mathrm{SO}(V)}$, where each map $\varphi_{i}: P \to \mathbb{R}$ is an $\mathrm{SO}(V)$-equivariant function defined on Riemannian manifold $P$. The action of the Laplace-Beltrami operator $\Delta_{g}$ on space $C^{\infty}(P,V)^{\mathrm{SO}(V)}$ is given by
\begin{equation}\label{laplaciancoordinaties}
\Delta_{\tilde{g}}\Phi = (\Delta_{\tilde{g}}\varphi_{1}, \Delta_{\tilde{g}}\varphi_{2}, \ldots, \Delta_{\tilde{g}}\varphi_{m})
\end{equation}
where $\Delta_{\tilde{g}}\varphi_{k} = \lambda\varphi_{k}$ for each $k \in \{1, \ldots, m\}$. In addition, the operator $\Delta_{\tilde{g}}$ is an essentially self-adjoint operator on the Hilbert space $L^{2}(P, V)^{\mathrm{SO}(V)}$ given by completion of $C^{\infty}(P, V)^{\mathrm{SO}(V)}$ with respect to the global $L^{2}$-inner product
\begin{equation}\label{normequivariant}
\|\Phi\|_{L^{2}}^{2} = \int_{P}\langle \Phi(x), \Phi(x) \rangle_{\mathbb{R}^{m}} \,  dv_{g},    
\end{equation}
where $\langle \cdot, \cdot\rangle_{\mathbb{R}^{m}}$ denotes the Euclidean inner product on vector space $V$. This structure allows us to obtain a spectral decomposition of this space: 
\begin{equation*}
L^{2}(P, V)^{\mathrm{SO}(V)} = \bigoplus_{\lambda} E_{\lambda}^{\mathrm{SO}(V)},   
\end{equation*}
where $ E_{\lambda}^{\mathrm{SO}(V)} = \{ \Phi \in L^{2}(P, V)^{\mathrm{SO}(V)} \mid \Delta_{\tilde{g}}\Phi = \lambda \Phi \}
$. As previously mentioned, there exists a one to one correspondence between the spaces $C^{\infty}(P, V)^{\mathrm{SO}(V)}$ and $\Gamma(E)$ given by $C^{\infty}(M)$-isomorphism $\xi$. In particular, this isomorphism extend to isometry between the spaces $L^{2}(P, V)^{\mathrm{SO}(V)}$ and and $\Gamma_{L^{2}}(E)$. Indeed, consider $\Phi \in L^{2}(P, V)^{\mathrm{SO}(V)}$ such that $\Phi = \xi(u)$, where $u \in \Gamma_{L^{2}}(E)$, from (\ref{normequivariant}), we have that 
\begin{eqnarray*}
\|\xi(u)\|_{L^{2}}^{2} &=& \int_{P}\langle \xi(u)(x), \xi(u)(x) \rangle_{\mathbb{R}^{m}} \,  dv_{g}\\
& = &  \int_{P}\langle p^{-1}(u(\tilde{\pi}(x)), p^{-1}(u(\tilde{\pi}(x)) \rangle_{\mathbb{R}^{m}} \,  dv_{g}.
\end{eqnarray*}
Since $p^{-1}$ is an isometry, we have that 
$$
\int_{P}\langle p^{-1}(u(\tilde{\pi}(x)), p^{-1}(u(\tilde{\pi}(x)) \rangle_{\mathbb{R}^{m}} \,  dv_{g} = \int_{P}\langle u(\tilde{\pi}(x), u(\tilde{\pi}(x) \rangle_{E} \,  dv_{g}.
$$
Thus, given that $P$ is a principal $\mathrm{SO}(V)$-bundle, it follows that every point $y \in M$ is the image of all points $x \in P$ such that $\tilde{\pi}(x)=y$. Thus, $\|\xi(u)\|_{L^{2}}^{2} = \|u\|_{L^{2}(E)}^{2}$.

\begin{remark} The relation between the spaces $L^{2}(P, V)^{\mathrm{SO}(V)}$ and $\Gamma_{L^{2}}(E)$ by isometry above and spectral decomposition properties allows us to conclude that an eigenvalue $\lambda$ of the Connection Laplacian operator $\mathscr{L}_{g}$ is simple if and only if the eigenvalue $\lambda_{\tilde{g}}$ of the Laplace-Beltrami operator $\Delta_{\tilde{g}}$ acting on $L^{2}(P, V)^{\mathrm{SO}(V)}$ is $\mathrm{SO}(V)$-simple.  
    
\end{remark}

\begin{lemma}\label{TeseMarrocos}Under the same hypotheses as above, assume in addition that $\Phi, \Psi : P \to \mathbb{R}^{2}$ with $\|\Phi\|_{L^{2}}=\|\Psi\|_{L^{2}}=1$ satisfies the following conditions:
\vspace{0.2cm}
\begin{enumerate}[label=(\roman*)]
    \item $\|\Psi(x)\| = \|\Phi(x)\|$ for all $x \in P$,\\[0.01cm]
    \item $\|d\Phi^{h}(X)
)\|_{\mathbb{R}^{m}}^{2} = \|d^{h}\Psi(X)\|_{\mathbb{R}^{m}}^{2}$ for any vector field $X \in \Gamma(TM)$.
\end{enumerate}
\vspace{0.2cm}
Then, there exists an isometry $A: \mathbb{R}^{2} \to \mathbb{R}^{2}$ such that
$$
\Phi(x) = A \Psi(x), \quad \text{for all} \ x \in M.
$$

\begin{proof}
Let $\Psi, \Phi$ be two $\mathrm{SO}(2)$-equivariant functions defined on Riemannian submersion with totally geodesic fibers $P$. From rigid hypothesis, we have that $\|\Phi(x)\|_{\mathbb{R}^{2}}^{2}=\|\Psi(x)\|_{\mathbb{R}^{2}}^{2}$ for all $x\in P$. Consequently, there exists an element $T(x) \in \mathrm{SO}(2)$ given by 
\begin{equation*}
T(x) = 
\begin{bmatrix}
\cos \theta(x) & -\sin \theta(x) \\
\sin \theta(x) & \cos \theta(x)
\end{bmatrix}.
\end{equation*}
such that $\Psi(x)=T(x)\Phi(x)$, for $x \in P$. From $\mathrm{SO}(2)$-equivariance hypothesis, we have
$$
\Psi(x\cdot h) = T(x\cdot h)\Phi(x\cdot h) = T(x\cdot h)h^{-1}\Phi(x)
$$
On the other hand,
$$
\Psi(x\cdot h) = h^{-1}\Psi(x) = h^{-1}T(x)\Phi(x)
$$
Therefore, surely
$$
T(x\cdot h) h^{-1}\Phi(x) = h^{-1} T(x) \Phi(x).
$$
Since $\Phi(x)\neq 0$ for a dense set of $P$, it follows that $T(x\cdot h) = h^{-1}T(x)h$, for all $h \in \mathrm{SO}(2)$. Moreover, the Lie group $\mathrm{SO}(2)$ is abelian, and we conclude that $T$ is $\mathrm{SO}(2)$-invariant.

We will prove the infinitesimal rigid hypothesis in vertical direction, that is, given $\mathrm{SO}(2)$-equivariant functions $\Psi, \Phi$ with $\|\Psi \|=\|\Phi\|=1$, we have
$$
\sum^{m}_{i=1}\|d\Psi(Y_{i}) \|^2= \sum^{m}_{i=1}\|d\Phi(Y_{i}) \|^2
$$
where $Y_{i}$ belongs to the local vertical orthonormal frame on $P$. 
For this, we must prove that rigid eigenspace hypothesis implies infinitesimal rigid of the eigenspaces in vertical direction.  

Let $\Phi, \Psi$ be a $\mathrm{SO}(2)$-equivariant function defined on $P$ with values in $V $ and consider 
$$
\gamma : (0,1) \to \mathrm{SO}(2) \ \text{a smooth curve such that} \ \gamma(0)=e, \ \dot{\gamma}(0) = X_{e} \in \mathfrak{so}(2).
$$ 

We can move any point $x \in P$ along to $\gamma$ by relation $x(t) = x \cdot \gamma(t)$. 

Since $\Phi(x)=T(x)\Psi(x)$, we have 
$$
\langle \Phi(x(t)), \Phi(x(t)) \rangle = \langle T(x(t))\Psi(x(t)), T(x(t))\Psi(x(t)) \rangle
$$
and differentiating with respect to $t=0$
$$
\|\dot{\Phi}(x) \|^{2}=\langle \dot{\Phi}(x), \dot{\Phi}(x) \rangle = \langle T(x)\dot{\Psi}(x), T(x)\dot{\Psi}(x) \rangle = \|\dot{\Psi}(x)\|^{2} 
$$
Consequently, we obtain the infinitesiam rigid hypothesis in any directions $\tau = \frac{\partial}{\partial x_{i}} + \frac{\partial}{\partial x_{j}}$ belongs to $T_{x}P$, that is 
$$
\left\|\frac{\partial}{\partial \tau}\Psi(x)  \right\| = \left\|\frac{\partial}{\partial \tau}\Phi(x)  \right\|
$$

In what follows, we shall analyze the derivative of the $\mathrm{SO}(2)$-equivariant eigenfunction $\Psi$ with respect to $\tau$.

Differentiating $\Psi(x)=T(x)\Phi(x)$ with respect to $\tau$, we have 
\begin{equation*}
\frac{\partial}{\partial \tau}\Psi(x) = \frac{\partial}{\partial \tau}\left[T(x)\Phi(x)\right] = \left[\frac{\partial}{\partial \tau}T(x)\right]\Phi(x) + T(x)\left[\frac{\partial}{\partial \tau}\Phi(x)\right]
\end{equation*}

Consequently, 
\begin{eqnarray*}
\left\|\frac{\partial}{\partial \tau}\Psi(x)\right\|^{2} 
&=&  \left\|\left(\frac{\partial}{\partial \tau}T(x)\right)\Phi(x)\right\|^{2} + 2\left\langle T(x)^{t}\left(\frac{\partial}{\partial \tau}T(x)\right)\Phi(x), \left(\frac{\partial}{\partial \tau}\Phi(x)\right)  \right\rangle + \left\|\left(\frac{\partial}{\partial \tau}\Phi(x)\right)\right\|^{2}.
\end{eqnarray*}

On other hand, by denoting
$$
J = \begin{bmatrix}
0 & -1 \\
1 & 0
\end{bmatrix}
$$
we obtain
$$
\frac{\partial T(x)}{\partial \tau}  = \frac{\partial \theta(x)}{\partial \tau} J T(x).
$$
Consequently, we obtain
\begin{eqnarray*}
\left\|\frac{\partial}{\partial \tau}\Psi(x)\right\|^{2} &=&  \left\|\left(\frac{\partial}{\partial \tau}T(x)\right)\Phi(x)\right\|^{2} + 2\left\langle T(x)^{t}\left(\frac{\partial \theta(x)}{\partial \tau}J T(x)\right)\Phi(x), \left(\frac{\partial}{\partial \tau}\Phi(x)\right)  \right\rangle \left\|\left(\frac{\partial}{\partial \tau}\Phi(x)\right)\right\|^{2}\\
&=&  \left\|\left(\frac{\partial \theta(x)}{\partial \tau} J T(x)\right)\Phi(x)\right\|^{2} + 2\left\langle\left(\frac{\partial \theta(x)}{\partial \tau}J\right)\Phi(x), \left(\frac{\partial}{\partial \tau}\Phi(x)\right)  \right\rangle + \left\|\left(\frac{\partial}{\partial \tau}\Phi(x)\right)\right\|^{2}
\end{eqnarray*}
From infinitesimal rigid hypothesis, we can rewrite the equation above as 
\begin{eqnarray*}
\frac{\partial \theta(x)}{\partial \tau} \cdot \left(\frac{\partial \theta(x)}{\partial \tau}\left\|\Phi(x)\right\|^{2} + 2\left\langle J\Phi(x), \left(\frac{\partial}{\partial \tau}\Phi(x)\right)  \right\rangle\right) &=&0.
\end{eqnarray*}
Now, we will analyze the identity above. If $\frac{\partial \theta(x)}{\partial \tau}=0$ it follows that $\theta$ is constant, we are done. Suppose then that 
$$
\frac{\partial \theta(x)}{\partial \tau}\left\|\Phi(x)\right\|^{2} + 2\left\langle J\Phi(x), \left(\frac{\partial}{\partial \tau}\Phi(x)\right)\right\rangle =0
$$
we have that 
$$
\frac{\partial \theta(x)}{\partial \tau} = \frac{-2\left\langle J\Phi(x), \left(\frac{\partial}{\partial \tau}\Phi(x)\right)\right\rangle}{\left\|\Phi(x)\right\|^{2}}
$$
Consider $\Phi=(\varphi_{1}, \varphi_{2})$, where each $\varphi_{i}: P \to \mathbb{R}$ is an $\mathrm{SO}(2)$-equivariant function with $i=1,2$. Then,
\begin{eqnarray*}
    \left\langle J\Phi(x), \left(\frac{\partial}{\partial \tau}\Phi(x)\right)\right\rangle &=& \left\langle (-\varphi_{2}(x), \varphi_{1}(x)), \left(\frac{\partial \varphi_{1}(x)}{\partial \tau}, \frac{\partial \varphi_{2}(x)}{\partial \tau}\right)\right\rangle\\
    & = & -\varphi_{2}(x)\frac{\partial \varphi_{1}(x)}{\partial \tau}+\varphi_{1}(x)\frac{\partial \varphi_{2}(x)}{\partial \tau}
\end{eqnarray*}
Thus the above equation can be rewritten as
\begin{equation}\label{differentialtheta}
\frac{\partial \theta(x)}{\partial \tau} = \frac{-2\left(\varphi_{2}(x)\frac{\partial \varphi_{1}(x)}{\partial \tau}-\varphi_{1}(x)\frac{\partial \varphi_{2}(x)}{\partial \tau}\right)}{\varphi_{1}(x)^{2} + \varphi_{2}(x)^{2}} = -2\frac{\partial}{\partial \tau}\arctan{\left(\frac{\varphi_{1}(x)}{\varphi_{2}(x)}\right)}
\end{equation}
From relation (\ref{differentialtheta}), we have
\begin{equation*}
    \frac{\partial \theta(x)}{\partial \tau} + 2\frac{\partial}{\partial \tau}\arctan{\left(\frac{\varphi_{1}(x)}{\varphi_{2}(x)}\right)} = 0
\end{equation*}
Hence, 
\begin{equation*}
\theta(x) = -2\arctan{\left(\frac{\varphi_{1}(x)}{\varphi_{2}(x)}\right)} + C.
\end{equation*}
where $C$ is a constant.

Now, we will determine the matrix $T(x)$ explicitly. To simplify the calculations, let us assume the following notation
$$
\theta(x) = g(x) + C, \quad \text{where} \quad g(x) := -2 \arctan\left( \frac{\varphi_{1}(x)} {\varphi_{2}(x)} \right).
$$

Consequently, the matrix $T(x)$ can be written as
$$
T(x) = 
\begin{bmatrix}
\cos{C} & -\sin{C} \\
\sin{C} & \cos{C}
\end{bmatrix}
\begin{bmatrix}
\cos{g(x)} & -\sin{g(x)}\\
\sin{g(x)} & \cos{g(x)}
\end{bmatrix}.
$$

that is a total rotation by $\theta(x) = g(x) + C$ in $\mathbb{R}^{2}$, which can be interpreted in complex plane as
$$
\exp{i\theta(x)} = \cos{\theta(x)} + i\sin{\theta(x)}.
$$
Applying this to the complex vector $\Phi(x) = \varphi_{1}(x) + i\varphi_{2}(x)$, we obtain the relation
$$
\Psi(x) := \exp\left\{{i\theta(x)}\right\} \Phi(x), \quad \text{where} \  \theta(x) = -2 \arctan\left( \frac{\varphi_{1}(x)} {\varphi_{2}(x)}\right) + C.
$$
Therefore, we can rewrite:
$$
\Psi(x) = \exp{\left\{i \arctan\left(\frac{\varphi_{1}(x)} {\varphi_{2}(x)}\right) + iC\right\}} \ \Phi(x) = \exp\{{iC}\} \exp{\left\{-2i \arctan\left(\frac{\varphi_{1}(x)} {\varphi_{2}(x)}\right)\right\}} \Phi(x).
$$
Given that
$$
\exp{\left\{-2i \arctan\left(\frac{\varphi_{1}(x)} {\varphi_{2}(x)}\right)\right\}} = \left( \exp{\left\{i \arctan\left(\frac{\varphi_{1}(x)} {\varphi_{2}(x)}\right)\right\}} \right)^{-2},
$$
it follows that
\begin{equation}\label{psiiC}
\Psi(x) = \exp\left\{{iC}\right\} \cdot \frac{\Phi(x)} {\left( \exp{\left\{i \arctan\left(\frac{\varphi_{1}(x)} {\varphi_{2}(x)}\right)\right\}} \right)^{2}}.    
\end{equation}
Thus, using the polar form of $\Phi(x)$, we obtain
$$
\Phi(x) = \|\Phi(x)\| \exp{\{i\phi(x)\}} \quad \Longrightarrow \quad \frac{\Phi(x)} {\|\Phi(x)\|} = \exp{\{i\phi(x)\}}.
$$
Therefore,
$$
\exp{\left\{i \arctan\left(\frac{\varphi_{1}(x)} {\varphi_{2}(x)}\right)\right\}} = \left( \frac{\Phi(x)} {\|\Phi(x)\|} \right) \quad \Longrightarrow \quad \exp{\left\{-2i \arctan\left(\frac{\varphi_{1}(x)} {\varphi_{2}(x)}\right)\right\}}= \left( \frac{\overline{\Phi(x)}}{\|\Phi(x)\|} \right)^{2}
$$
Since $\exp\{{-2i\phi}\} = \left( \overline{\exp{\{i\phi\}}} \right)^{2}$, we can rewrite the relation (\ref{psiiC}) as 
$$
\Psi(x) = \exp{\{iC\}} \cdot \left(\frac{\overline{\Phi(x)}}{\|\Phi(x)\|} \right)^{2} \cdot \Phi(x) = \exp{\{iC\}} \cdot \frac{\overline{\Phi(x)}^{2}}{\|\Phi(x)\|^{2}} \cdot \Phi(x).
$$
In conclusion, given that
\begin{equation*}
\frac{\overline{\Phi(x)}^{2}}{\|\Phi(x)\|^{2}} \cdot \Phi(x) = \overline{\Phi(x)}
\end{equation*}
holds, we have $\Psi(x) = \exp{\{iC\}} \cdot \overline{\Phi(x)}$. Therefore, the orthogonal transformation $T$ is given by
$$
T(x)= 
\begin{bmatrix}
\cos{C}  & -\sin{C} \\
\sin{C} & \cos{C}
\end{bmatrix}\begin{bmatrix}
0  & -1 \\
1 & 0
\end{bmatrix}.
$$ 
\end{proof}
\end{lemma}

\backmatter

\bmhead{Acknowledgements}
This research was partially supported by Coordenação de Aperfeiçoamento de Pessoal de Nível Superior (CAPES) and partially supported by Fundação de Amparo à Pesquisa do Estado de São Paulo-FAPESP 2020/14075-6.

\bibliography{laplacian_connection_Principal_Bundle}

\end{document}